\date{}
\newcommand{\C}{\mathbb{C}}
\newcommand{\R}{\mathbb{R}}
\newcommand{\g}{\mathfrak{g}}
\newcommand{\p}{\mathfrak{p}}
\newcommand{\bdm}{\begin{displaymath}}
\newcommand{\edm}{\end{displaymath}}
\theoremstyle{definition}
\newtheorem{lem}{Lemma}[section]
\newtheorem{thm}{Theorem}[section]
\newtheorem{prop}{Proposition}[section]
\newtheorem{rem}{Remark}[section]
\title{Harmonic maps of finite uniton number  into $G_2$}
\author{N. Correia and R. Pacheco}
\begin{document}

\maketitle

\begin{abstract}
We establish explicit formulae for canonical factorizations of extended solutions corresponding to harmonic maps of finite uniton number into the exceptional Lie group $G_2$  in terms of the Grassmannian model for  the group of based algebraic loops in $G_2$. A description of the ``Frenet frame data" for such harmonic maps is given. In particular, we show that harmonic spheres into $G_2$ correspond to solutions of  certain algebraic systems of quadratic and cubic equations.
\end{abstract}

\section{Introduction}
In the seminal paper \cite{U}, Uhlenbeck observed that harmonic maps from a Riemann surface into a Lie group $G$ correspond to certain holomorphic maps, the \emph{extended solutions}, into the loop group
$$\Omega G=\{\gamma:S^1\to G \,(\mbox{smooth})\mid\,\gamma(1)=e\}.$$
When the Fourier series associated to an extended solution has finitely many terms, the corresponding harmonic map is said to have \emph{finite uniton number}.  Such harmonic maps can be obtained from a constant by applying a finite number of ``B\"{a}cklund-type transforms''. In terms of loop groups, this means that extended solutions corresponding to harmonic maps of finite uniton number into the unitary group admit  factorizations into linear factors. Subsequently, Burstall and Guest \cite{BG} generalized this to other Lie groups
and gave ``Weierstrass-type formulae'', by means of which harmonic maps of finite uniton number can be described in terms of certain meromorphic functions on $M$. This was accomplished by
using a method inspired by the Morse theoretic interpretation of the Bruhat decomposition of the loop subgroup
$$\Omega_\mathrm{alg}G=\{\gamma\in\Omega G \,|\,\mbox{$\gamma$ and $\gamma^{-1}$ have finite Fourier series}\}.$$
More precisely:

Consider the energy functional $E:\Omega G\to \mathbb{R}$ given by $E(\gamma)=\int_{S^1}|\gamma'|^2$. This is a Morse-Bott function on the K\"{a}hler manifold $\Omega G$ and its critical manifolds are precisely the conjugacy classes of homomorphisms $S^1\to G$. If $\Omega_\xi$ is such a class and $U_\xi$ is the unstable manifold of $\Omega_\xi$ with respect to the flow of the gradient vector field $-\nabla E$, then the Bruhat decomposition corresponds to the decomposition $\Omega_\mathrm{alg}G=\bigcup_\xi U_\xi$. Moreover, $U_\xi$ carries a structure of vector bundle over $\Omega_\xi$ and, given a finite uniton number harmonic map $\varphi:M\to G$, it can be proven that it admits an extended solution $\Phi:M\to \Omega_\mathrm{alg}G$ which takes values in some $U_\xi$ off a discrete subset $D$ of $M$. The unstable manifolds $U_\xi$ admit a suitable Lie theoretic description, which can therefore be applied to the study of harmonic maps.

In the present paper, we  explore further this point of view. For certain pairs of elements $\xi,\xi'$, we introduce natural holomorphic fibre bundle morphisms $\mathcal{U}_{\xi,\xi'}:U_\xi\to U_{\xi'}$, which we use later to construct canonical factorizations for loops in $\Omega_\mathrm{alg}G_2$. These morphisms transform extended solutions in new extended solutions, that is, if $\Phi:M\setminus D \to U_\xi$ is an extended solution, then $\mathcal{U}_{\xi,\xi'}\circ\Phi:M\setminus D \to U_{\xi'}$ is a new extended solution. Hence the canonical factorizations for loops induce canonical factorizations for extended solutions. In the case of the exceptional Lie group $G_2$, our factorizations are finer than those constructed in \cite{BG}. Explicit formulae for these factorizations are given in terms of the Grassmannian model for $\Omega_\mathrm{alg}G_2$.

The Grassmannian model for loop groups  was exploited for the first time in the study of harmonic maps by Segal \cite{S}. In this setting, as observed by Guest \cite{G}, the ``holomorphic data" producing  harmonic maps can be organized in terms of ``Frenet frames". More recently, the Grassmannian model was used by the second author \cite{Pa} and Svensson and Wood \cite{SW} in the study of harmonic maps into classical Lie groups and their inner symmetric spaces. 

In this paper,  we give a description of the ``Frenet frame data" for harmonic maps of finite uniton number into $G_2$ and its inner symmetric space $G_2/SO(4)$, the Grassmannian of associative $3$-planes. In particular, we show that each harmonic sphere into $G_2$ can be obtained  from  a solution of a certain algebraic system of quadratic and cubic equations.

\section{The fundamental representation of $G_2$ }\label{rep}
We start by reviewing  some aspects concerning the fundamental representation of $G_2$. For more details we refer the reader to \cite{Ful}, Lecture 22.

It is well known that the exceptional compact simple Lie group $G_2$ is exactly the automorphism
group of $\mathbb{O}$, the  real
$8$-dimensional division algebra of octonions.
Equip $\mathbb{O}$ with the natural inner product
$\langle x,y \rangle_\R ={\rm{Re}}(x\cdot \bar{y})=\frac12(x\cdot
\bar{y}+y\cdot\bar{x})$. Since this metric is defined as the
multiplication, we get $G_2\subset SO(\mathbb{O})$. Every automorphism of the octonions fixes the subspace
$\mathbb{R}\cdot 1\subset \mathbb{O}$ and thus preserves the subspace of octonions orthogonal to the identity, that is, the $7$-dimensional subspace consisting of all imaginary octonions, ${\rm{Im}}(\mathbb{O})$. So, if we
identify ${\rm{Im}}(\mathbb{O})=\mathbb{R}^7$, we get exactly the
fundamental representation $G_2\subset SO(7)$, which is the smallest non-trivial representation of $G_2$. If we fix a maximal torus $T\subset G_2$, which is known to be $2$-dimensional, and a Weyl chamber $\mathcal{W}$ in $\mathfrak{t}$, the Lie algebra of $T$, the corresponding weight diagram looks like:
\begin{center}
\quad\!\!\!
\beginpicture
\setcoordinatesystem units <1.2cm,1.2cm>
\put {\phantom{.}} at -2.09110 -1.28401
\put {\phantom{.}} at -2.09110 1.27012
\put {\phantom{.}} at 2.28741 -1.28401
\put {\phantom{.}} at 2.28741 1.27012
\setlinear
\setshadesymbol <z,z,z,z> ({\fiverm .})
\setshadegrid span <1.3600000pt>
\vshade 0.000000 0.000000 0.000000
 0.005329 0.009230 1.270090
 0.733287 1.270090 1.270090
 /
\plot -0.73180 1.26751 0.73982 -1.28140 /
\plot -0.73982 -1.28140 0.73180 1.26751 /
\putrule from -2.08849 0.00000 to 2.28480 0.00000
\plot -2.08849 -0.86603 2.28480 -0.86603 /
\plot 2.28480 0.86603 -2.08849 0.86603 /
\plot -1.73982 -1.28140 -0.26820 1.26751 /
\plot 0.26820 1.26751 1.73982 -1.28140 /
\plot 1.73180 1.26751 0.26018 -1.28140 /
\plot -1.73180 1.26751 -0.26018 -1.28140 /
\plot -1.26820 1.26751 -2.08849 -0.15327 /
\plot -2.08849 0.15327 -1.26018 -1.28140 /
\plot 2.28480 0.49329 1.26018 -1.28140 /
\plot 1.26820 1.26751 2.28480 -0.49329 /
\put {$\scriptscriptstyle\bullet$} at 0.00000 0.00000
\put {$\scriptscriptstyle\bullet$} at 1.00000 0.00000
\put {$\scriptscriptstyle\bullet$} at -1.00000 0.00000
\put {$\scriptscriptstyle\bullet$} at 0.50000 0.86603
\put {$\scriptscriptstyle\bullet$} at -0.50000 -0.86603
\put {$\scriptscriptstyle\bullet$} at -0.50000 0.86603
\put {${\scriptscriptstyle\bullet}$} at 0.50000 -0.86603
\put {\tiny{$L_1$}} at 0.72 0.97
\put {\tiny{$L_2$}} at 1.2 0.1
\put {\tiny{$L_3$}} at 0.7 -0.77
\put {\tiny{$L_{-1}$}} at -0.76 -0.77
\put {\tiny{$L_{-2}$}} at -1.3 0.1
\put {\tiny{$L_{-3}$}} at -0.77 0.97
\put {\tiny{$L_0$}} at -0.18 0.1
\endpicture
\end{center}

 The
octonionic-imaginary part of the product of $x,y\in\mathbb{C}^7={\rm{Im}}(\mathbb{O})\otimes\mathbb{C}$  will be denoted by
 $x\cdot y\in\mathbb{C}^7$and consider the orthogonal decomposition of $\C^7$ into
one-dimensional weight subspaces:
$\C^7=\bigoplus_{i=-3}^{3}L_i,$
where  $L_{-i}=\overline{L_i}$.
Let $\omega_j$ be the weight of $L_j$. Clearly, $L_i\cdot L_j$ is the weight space for $\omega_i+\omega_j$, if this is a weight, and zero otherwise. Hence, we obtain from the weight diagram the following octonionic multiplication table:
$$\begin{tabular}{|c||c|c|c|c|c|c|c|}
  \hline
  $\cdot$ & $L_0$ & $L_1$ & $L_2$ & $L_3$ &  $\overline{L_1}$ &  $\overline{L_2}$  &  $\overline{L_3}$  \\
  \hline \hline
  $L_0$ & $0$ & $L_1$ & $L_2$ & $L_3$ & $\overline{L_1}$ & $\overline{L_2}$ & $\overline{L_3}$   \\\hline
  $L_1$ & $L_1$ & $0$ & $0$ & $L_2$ & $L_0$ & $\overline{L_3}$ & $0$ \\\hline
  $L_2$ & $L_2$ & $0$ & $0$ & $0$ & $L_3$ & $L_0$ & $L_1 $  \\\hline
 $L_3$ & $L_3$ & $L_2$ & $0$ & $0$ & $0$ & $\overline{L_1}$ & $L_0$ \\\hline
 $\overline{L_1}$& $\overline{L_1}$ & $L_0$ & $L_3$ & $0$ & $0$ & $0$ & $\overline{L_2}$  \\\hline
  $\overline{L_2}$ & $\overline{L_2}$ & $\overline{L_3}$ & $L_0$ & $\overline{L_1}$ & $0$ & $0$ & $0$  \\\hline
 $\overline{L_3}$& $\overline{L_3}$& $0$ & $L_1$ & $L_0$ & $\overline{L_2}$ & $0$ & $0$   \\
  \hline
\end{tabular}$$

The  positive roots associated to the pair $(T,\mathcal{W})$ are given by:
\begin{equation}\label{roots}
\alpha_1,\alpha_2,\alpha_1+\alpha_2, 2\alpha_1+\alpha_2, 3\alpha_1+\alpha_2,3\alpha_1+2\alpha_2,
 \end{equation}
 with the simple roots $\alpha_1,\alpha_2$ dual to
 the elements $H_1,H_2\in\mathfrak{t}$
defined by
\begin{equation}\label{Hs}
 H_1=\left\{\begin{array}{ll}
        2\sqrt{-1} & \mbox{on $L_1$} \\
        \sqrt{-1} &  \mbox{on $L_2$}\\
        \sqrt{-1} & \mbox{on $\overline{L}_3$}
      \end{array}\right.\quad\quad H_2=\left\{\begin{array}{ll}
        \sqrt{-1} & \mbox{on $L_1$} \\
        0 & \mbox{on $L_2$}\\
        \sqrt{-1} &  \mbox{on $\overline{L}_3$}
      \end{array}\right.,\end{equation}
in the following sense: $\alpha_i(H_j)=\sqrt{-1}\delta_{ij}$.
The weights of the fundamental representation can be written in terms of the simple roots as follows:
$\omega_1=2\alpha_1+\alpha_2$, $\omega_2=\alpha_1$, $\omega_{-3}=\alpha_1+\alpha_2$.

Given an isotropic subspace $\mathcal{D}\subset \mathbb{C}^7$, we denote by $\mathcal{D}_0$ its stabilizer and by $\mathcal{D}^a$ its annihilator:
$$\mathcal{D}_0=\big\{x\in\C^7|\,\, x\cdot \mathcal{D}\subseteq \mathcal{D} \big\},\quad \mathcal{D}^a=\big\{x\in\C^7|\,\, x\cdot \mathcal{D}=0 \big\}.$$ We have:
\begin{lem}
\emph{Let $\mathcal{D}\subset \C^7$ be an isotropic one-dimensional subspace. Then $\dim {D}^a=3$ and $\mathcal{D}_0=\overline{\mathcal{D}^a}^\perp$.}
\end{lem}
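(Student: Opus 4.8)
The plan is to reduce the statement to the case of a single weight line by exploiting the transitivity of the compact group $G_2$ on isotropic lines in $\mathbb{C}^7$, and then to read everything off the multiplication table.

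First I would establish that $G_2$ acts transitively on the isotropic one-dimensional subspaces of $\mathbb{C}^7$. Writing a null vector as $v=a+\sqrt{-1}\,b$ with $a,b\in\mathbb{R}^7$, the condition $\langle v,v\rangle=0$ (for the complex-bilinear extension of $\langle\,,\rangle_\mathbb{R}$) is equivalent to $|a|=|b|$ and $a\perp b$; thus, after rescaling, isotropic lines correspond to orthonormal $2$-frames $(a,b)$ up to rotation in the plane they span. Since $G_2$ acts transitively on $S^6\subset\mathrm{Im}(\mathbb{O})$ with stabilizer $SU(3)$, and $SU(3)$ in turn acts transitively on the unit sphere $S^5$ of the orthogonal complement $\mathbb{R}^6\cong\mathbb{C}^3$, it follows that $G_2$ is transitive on orthonormal $2$-frames, hence on isotropic lines. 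In particular, every isotropic line can be moved to the weight line $L_1$ (which is isotropic, being a nonzero weight space) by some $\gamma\in G_2$.

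Next I would check that both assertions are $G_2$-invariant. An element $\gamma\in G_2$ is an automorphism of the octonion product, lies in $SO(7)$, and, being real, commutes with complex conjugation; hence $\gamma$ is unitary for the Hermitian extension of $\langle\,,\rangle_\mathbb{R}$, with respect to which $\perp$ is taken and the weight spaces $L_i$ are mutually orthogonal. It follows that $\gamma(\mathcal{D}^a)=(\gamma\mathcal{D})^a$ and $\gamma(\mathcal{D}_0)=(\gamma\mathcal{D})_0$, that $\dim\mathcal{D}^a$ is preserved, and that $\gamma$ commutes with both $\overline{(\cdot)}$ and $(\cdot)^\perp$. Therefore it suffices to prove the lemma for $\mathcal{D}=L_1$. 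For this case the computation is direct: reading the column of $L_1$ in the multiplication table, the weight spaces annihilating $L_1$ are exactly $L_1,L_2,\overline{L_3}$, so $\mathcal{D}^a=L_1\oplus L_2\oplus\overline{L_3}$ and $\dim\mathcal{D}^a=3$; likewise the weight spaces $L_j$ with $L_j\cdot L_1\subseteq L_1$ are $L_0,L_1,L_2,\overline{L_3}$, and since products of distinct weight spaces land in distinct weight spaces no cancellation can occur, giving $\mathcal{D}_0=L_0\oplus L_1\oplus L_2\oplus\overline{L_3}$. On the other hand $\overline{\mathcal{D}^a}=\overline{L_1}\oplus\overline{L_2}\oplus L_3$, and its Hermitian orthogonal complement (obtained by removing these three weight spaces) is $\overline{\mathcal{D}^a}^\perp=L_0\oplus L_1\oplus L_2\oplus\overline{L_3}=\mathcal{D}_0$, as required.

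The main obstacle is the transitivity step, where one must use the compact real form $G_2$ rather than $G_2^{\mathbb{C}}$: although $G_2^{\mathbb{C}}$ also acts transitively on isotropic lines (they form the quadric $Q^5\cong G_2^{\mathbb{C}}/P$), a complex group element would fail to commute with conjugation and would destroy the identity $\mathcal{D}_0=\overline{\mathcal{D}^a}^\perp$; the unitarity afforded by the compact form is exactly what makes both $\perp$ and $\overline{(\cdot)}$ equivariant. The remaining verification on the single line $L_1$ is routine. An alternative, more computational route avoiding homogeneity would analyze the rank of the cross-product map $x\mapsto x\cdot v$ directly for a general null $v$, but controlling its degeneration precisely at null vectors is exactly what the transitivity argument sidesteps.
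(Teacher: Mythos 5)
Your proof is correct and follows the same route as the paper: reduce to $\mathcal{D}=L_1$ by the transitivity of $G_2$ on isotropic lines, then read off $\mathcal{D}^a$ and $\mathcal{D}_0$ from the multiplication table. The only difference is that you spell out the transitivity and equivariance details that the paper leaves implicit.
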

\begin{proof}
  Since $G_2$ acts
transitively on the one-dimensional isotropic subspaces, we can
take ${\mathcal{D}}=L_1$.
From the octonionic multiplication table we see that
$\mathcal{D}_0=L_0\oplus L_1\oplus L_2\oplus \overline{L}_3$ and $\mathcal{D}^a= L_1\oplus L_2\oplus \overline{L}_3$. Hence  $\dim {\mathcal{D}}^a=3$ and $\mathcal{D}_0=\overline{\mathcal{D}^a}^\perp$.
  \end{proof}

   An isotropic $2$-plane $\mathcal{D}$ such that $\mathcal{D}\cdot\mathcal{D}=0$ is called an\emph{ complex coassociative $2$-plane}. If $\mathcal{D}$  is  an complex coassociative $2$-plane, then we have an orthogonal decomposition:
   $\C^7=\mathcal{D}\oplus \mathcal{A}\oplus \overline{\mathcal{D}},$
   where $\mathcal{A}=( \mathcal{D}\oplus\overline{\mathcal{D}})^\perp$ is called a \emph{complex associative $3$-plane}.

\section{Grassmannian model for loop groups}

Fix on $\mathbb{C}^{n}$ the standard complex inner product $\langle
\cdot,\cdot\rangle$ and let $e_{1},\ldots,e_{n}$ be the standard basis vectors for
 $\mathbb{C}^{n}$. Given a complex subspace $V\subset \C^n$, we denote by $\pi_V$ the orthogonal projection onto $V$.
Let $H$ be the Hilbert space
 of square-summable $\C^{n}$-valued
functions on the
 circle and $ \langle\cdot,\cdot\rangle_H$ the induced complex inner product. This is the closed space generated by
the functions
$\lambda\mapsto\lambda^{i}e_{j}$, with $i\in\mathbb{Z}$ and $j=1,\ldots,n$: $$H=\mbox{Span}\{ \lambda^{i}e_{j}\,|\,i\in\mathbb{Z},\,
 j=1,\ldots,n\}.$$
 Consider the closed subspace $H_+$ of $H$ defined by
 $$H_+=\mbox{Span}\{ \lambda^{i}e_{j}\,|\,i\geq 0,\,
 j=1,\ldots,n\}.$$
 Let $\mbox{\emph{Grass}}(H)$ denote the set of all closed vector
 subspaces $W\subset H$ such that:
 the projection map $W\rightarrow H_+$ is
 Fredholm, and the projection map $W\rightarrow
 H_+^\perp$ is Hilbert-Schmidt;
the images of the projections maps $W^{\perp}\rightarrow H_+$, $W\rightarrow
 H_+^\perp$ are contained in $C^{\infty}(S^{1};\mathbb{C}^{n})$.  Define
$${Gr}=\{W\in \mbox{\emph{Grass}}(H)\,|\, \lambda W\subseteq
W \}.$$

    Pressley and Segal \cite{PS} showed that the action of the infinite-dimensional Lie group
 $$\Lambda U(n)=\big\{\gamma:S^1\to U(n)\,|\, \mbox{$\gamma$ is smooth}\big\}$$
 on $Gr$ defined by $\gamma  W=\{\gamma f\,|\,f\in W\}$ is transitive. By considering Fourier series, it is easy to see that the
 isotropy subgroup at $H_+$ is precisely
 ${U}(n)$. Hence
 $$Gr\cong \Lambda U(n)/U(n)\cong \Omega U(n).$$  This homogeneous space carries a natural invariant structure of K\"{a}hler manifold \cite{PS}.
\begin{rem}\label{rem}
 Given $W\in {Gr}$, it is known \cite{PS} that
$\dim W \ominus \lambda W=n$, where $W \ominus
\lambda W$ denotes the orthogonal complement of $\lambda W$ in
$W$. If we choose an orthonormal basis for $W\ominus\lambda W$,
$\{w_1,\ldots,w_n\}$, we can put the vector-valued functions
$w_i$ side by side to form an $(n\times n)$-matrix valued
function $\gamma$  on $S^1$, that is, a loop $\gamma\in\Lambda
{U}(n)$. It can be shown \cite{PS} that $W=\gamma  H_+$.
\end{rem}

If $G$ is a subgroup of $U(n)$, we shall denote by $Gr(G)$ the subspace of $Gr$ that corresponds to $\Omega G$.

\subsection{Grassmannian model for $\Omega SO(n)$}
We consider the special orthogonal group  $SO(n)$ as a subgroup of $U(n)$. For each $X$ in $\C^n$ denote
by $\overline{X}$ its complex conjugate. The Grassmannian model of $\Omega SO(n)$ is given by:
\begin{prop}\cite{PS}
\textit{A subspace $W\in {Gr}$ corresponds to a loop in
${SO}(n)$ if, and only if, it belongs to
$${Gr}\big(SO(n)\big)=\big\{{W\in \mbox{Gr}\,|\,
\overline{W}^{\perp}=\lambda W \big\}}.$$}
\end{prop}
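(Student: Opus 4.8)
The plan is to use the identification $Gr \cong \Omega U(n)$ and to translate the defining relation $\overline{W}^\perp = \lambda W$ into a reality condition on the corresponding based loop. First I would fix notation for the conjugation: for $f \in H$ set $\overline{f}(\lambda) = \overline{f(\lambda)}$ (pointwise complex conjugation on $S^1$), extended to subspaces by $\overline{W} = \{\overline{f} \mid f \in W\}$. Since $\overline{\lambda^i e_j} = \lambda^{-i}e_j$ on $S^1$, this map is an antiunitary involution of $H$ sending $H_+$ to $\bigoplus_{i \le 0}\lambda^i \C^n$; consequently $\overline{H_+}^\perp = \lambda H_+$, so $H_+$ lies in the proposed set and corresponds to the identity loop, as it should. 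The one computation I will rely on repeatedly is that for $\gamma \in \Lambda U(n)$ and $f \in H$ one has $\overline{\gamma f} = \overline{\gamma}\,\overline{f}$, where $\overline{\gamma}(\lambda) := \overline{\gamma(\lambda)}$ is again a loop in $U(n)$.

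Next I would record the loop-theoretic meaning of $SO(n)$. A based loop $\gamma \in \Omega U(n)$ takes values in $SO(n)$ if and only if $\overline{\gamma} = \gamma$: indeed $\overline{\gamma(\lambda)} = \gamma(\lambda)$ says the unitary matrix $\gamma(\lambda)$ is real, hence orthogonal, and since $\gamma(1) = I$ and $S^1$ is connected the loop stays in the identity component $SO(n)$ rather than merely $O(n)$.

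The core of the argument is a direct computation of $\overline{W}^\perp$. Writing $W = \gamma H_+$ for the unique based loop $\gamma$ furnished by $Gr \cong \Omega U(n)$ (available from the transitivity statement and the Remark above), and using that multiplication by $\gamma$ and by $\lambda$ are unitary while conjugation is antiunitary (so all three respect orthogonal complements), I get $\overline{W} = \overline{\gamma}\,\overline{H_+} = \overline{\gamma}\bigl(\bigoplus_{i\le 0}\lambda^i\C^n\bigr)$ and therefore $\overline{W}^\perp = \overline{\gamma}\,(\lambda H_+) = \lambda\,\overline{\gamma}H_+$. Hence the condition $\overline{W}^\perp = \lambda W$ is equivalent to $\overline{\gamma}H_+ = \gamma H_+$, that is, to $\gamma^{-1}\overline{\gamma}$ lying in the isotropy subgroup, which is exactly the group $U(n)$ of constant loops.

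It remains to upgrade ``$\gamma^{-1}\overline{\gamma}$ constant'' to ``$\gamma$ real'', and this basepoint bookkeeping is the only delicate point. Writing $\gamma^{-1}\overline{\gamma} = A$ with $A \in U(n)$ constant and evaluating at $\lambda = 1$, where $\gamma(1) = I$, forces $A = \overline{\gamma(1)} = I$; thus $\overline{\gamma} = \gamma$ and $\gamma$ is $SO(n)$-valued by the previous paragraph. Conversely, if $\gamma$ is $SO(n)$-valued then $\overline{\gamma} = \gamma$ gives $\overline{\gamma}H_+ = \gamma H_+$ and hence $\overline{W}^\perp = \lambda W$. The main things to get right, besides this use of the basepoint to distinguish $SO(n)$ from an $O(n)$-coset, are the conjugation convention and the factor of $\lambda$: the conjugation is the pointwise conjugate on $S^1$ (equivalently $\lambda^i \mapsto \lambda^{-i}$ on Fourier modes), and it is precisely this $\lambda \mapsto \lambda^{-1}$ flip that produces the extra $\lambda$ relating $\overline{H_+}^\perp$ to $H_+$.
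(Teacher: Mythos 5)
The paper does not prove this proposition at all --- it is stated with a bare citation to Pressley--Segal, so there is no in-paper argument to compare yours against. Your proof is correct and complete as a self-contained argument, and it is the standard one: identify $W=\gamma H_+$ with the unique based loop, check that pointwise conjugation is antiunitary with $\overline{\lambda^i e_j}=\lambda^{-i}e_j$ and hence $\overline{H_+}^{\perp}=\lambda H_+$, push the conjugation through $\gamma$ to get $\overline{W}^{\perp}=\lambda\,\overline{\gamma}H_+$, and reduce the condition to $\gamma^{-1}\overline{\gamma}$ lying in the constant isotropy group $U(n)$. Your handling of the two points where such an argument usually goes wrong is exactly right: the basepoint evaluation at $\lambda=1$ is what forces $\gamma^{-1}\overline{\gamma}=I$ (rather than an arbitrary constant real structure) and what pins down $SO(n)$ rather than an $O(n)$-coset, and the $\lambda\mapsto\lambda^{-1}$ flip under conjugation is the source of the extra factor of $\lambda$. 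No gaps.
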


\subsection{Grassmannian model for $\Omega G_2$}

Take the complex bilinear extension to $\C^7$ of the octonionic product on $\mathbb{R}^7$ and use it to define a product on the Hilbert space $H$ of square-summable $\C^7$-valued functions on the circle: if $f,g\in H$, then $(f\cdot g)(\lambda)=f(\lambda)\cdot g(\lambda)$.

The Grassmannian model of $\Omega G_2$ is given by:
\begin{prop}
\emph{A subspace $W\in {Gr}\big(SO(7)\big)$ corresponds to
a loop in $G_2$ if, and only if, it belongs to \bdm
{Gr}(G_2)=\{W\in {Gr}\big(SO(7)\big)\,|\,
\,W^{sm}\cdot W^{sm}\subseteq W^{sm}\}\edm(here $W^{sm}$ denotes the
subspace  of smooth functions in $W$, which is dense \cite{PS})}.
\end{prop}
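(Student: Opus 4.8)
The plan is to use the description of $G_2$ as the automorphism group of the octonionic product: a loop $\gamma$ with $W=\gamma H_+$ lies in $\Lambda G_2$ if and only if, for every $\lambda\in S^1$, the orthogonal transformation $\gamma(\lambda)$ preserves $\cdot$, that is $\gamma(\lambda)(x\cdot y)=\gamma(\lambda)x\cdot\gamma(\lambda)y$ for all $x,y\in\C^7$. Since the product on $H$ is pointwise, this is exactly the statement that multiplication by $\gamma$ is an algebra homomorphism of $(H,\cdot)$. The first observation I would record is that $H_+$ is a subalgebra: from $\lambda^i e_j\cdot\lambda^k e_l=\lambda^{i+k}(e_j\cdot e_l)$ the product only raises Fourier degrees, so the product of two functions with non-negative modes again has non-negative modes. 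I would also use throughout that $\gamma$ is smooth and invertible, with $\gamma^{-1}=\gamma^{*}$, so that $W^{sm}=\gamma H_+^{sm}$ and $\gamma^{-1}W^{sm}=H_+^{sm}$.

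For the forward implication, assume $\gamma(\lambda)\in G_2$ for all $\lambda$. Given $F,G\in W^{sm}$, write $F=\gamma f$, $G=\gamma g$ with $f,g\in H_+^{sm}$; then $F\cdot G=\gamma f\cdot\gamma g=\gamma(f\cdot g)$ by the automorphism property, and $f\cdot g\in H_+^{sm}$ by the previous paragraph, so $F\cdot G\in\gamma H_+^{sm}=W^{sm}$. Hence $W^{sm}\cdot W^{sm}\subseteq W^{sm}$, and this direction needs nothing beyond the two observations above.

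The substance is the converse. Assume $W^{sm}\cdot W^{sm}\subseteq W^{sm}$. Applying this to the constant vectors $f(\lambda)=\gamma(\lambda)x$ and $g(\lambda)=\gamma(\lambda)y$, with $x,y\in\C^7$, which lie in $W^{sm}$, I obtain that the function $p_{x,y}\colon\lambda\mapsto\gamma(\lambda)x\cdot\gamma(\lambda)y$ belongs to $W$; equivalently $q_{x,y}:=\gamma^{-1}p_{x,y}$ lies in $H_+$, so it has only non-negative Fourier modes. The goal is to show that $q_{x,y}$ is the constant $x\cdot y$, for then $\gamma(\lambda)x\cdot\gamma(\lambda)y=\gamma(\lambda)(x\cdot y)$ for all $\lambda$, which is precisely $\gamma(\lambda)\in G_2$.

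To pin down $q_{x,y}$ I would show that it also has only non-positive modes, and this is where the $SO(7)$ condition enters. Because $\gamma$ takes values in the real group $SO(7)$, each $\gamma(\lambda)$ is a real orthogonal matrix, hence commutes with complex conjugation; since the octonionic product has real structure constants, conjugation is multiplicative, so $\overline{p_{x,y}}=\overline{\gamma x}\cdot\overline{\gamma y}=\gamma\bar x\cdot\gamma\bar y=p_{\bar x,\bar y}$, which again lies in $W$ by hypothesis. Thus $p_{x,y}\in\overline{W}$. Now the defining relation $\overline{W}^{\perp}=\lambda W$ gives $\overline{W}=(\lambda W)^{\perp}=\gamma\big((\lambda H_+)^{\perp}\big)=\gamma H_{\le 0}$, where $H_{\le 0}=\mathrm{Span}\{\lambda^k e_j\mid k\le 0\}$; hence $q_{x,y}=\gamma^{-1}p_{x,y}\in H_{\le 0}$. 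Combined with $q_{x,y}\in H_+$ this forces $q_{x,y}$ to be a constant vector, and evaluating at $\lambda=1$ with the based condition $\gamma(1)=e$ gives $q_{x,y}(1)=x\cdot y$, so $q_{x,y}\equiv x\cdot y$. The main obstacle is exactly this converse mode count: the hypothesis supplies only the non-negative bound coming from $W$, and the matching non-positive bound must be extracted from the reality structure $\overline{W}^{\perp}=\lambda W$ together with the multiplicativity of conjugation, after which the based condition $\gamma(1)=e$ closes the argument. I would be careful to carry out every manipulation on the dense subspaces of smooth functions, where pointwise multiplication and the inversion $\gamma^{-1}=\gamma^{*}$ cause no difficulty.
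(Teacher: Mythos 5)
Your proof is correct and is essentially the paper's argument in different clothing: the paper shows that $W\ominus\lambda W=W\cap\overline{W}$ is a $7$-dimensional subalgebra of $(H,\cdot)$ on which every evaluation map $\mathrm{ev}_\lambda$ is an isomorphism, while you prove the equivalent statement that $q_{x,y}=\gamma^{-1}(\gamma x\cdot\gamma y)$ lies in $H_+\cap(\lambda H_+)^{\perp}=\C^7$, and both hinge on the same two ingredients, namely the reality condition $\overline{W}^{\perp}=\lambda W$ and the fact that conjugation is multiplicative for the complexified octonionic product. Your write-up has the minor merit of making explicit the conjugation step and the role of the basing condition $\gamma(1)=e$ in pinning the constant down to $x\cdot y$, points the paper leaves implicit.
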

\begin{proof}
The proof of Theorem 8.6.2 in \cite{PS} can be adapted to this case.
If $\gamma\in \Omega G_2$, then it is clear that $\gamma H_+$
belongs to $Gr(G_2)$, since $G_2$ acts on $\mathbb{C}^7$ by
automorphisms. Conversely, suppose that $W^{sm}\cdot W^{sm}\subseteq
W^{sm}$. Since $\overline{W}^\perp =\lambda W$, we have $W\ominus
\lambda W=W\cap \overline{W}$. On the other hand, we know  that $\dim  W\ominus
\lambda W =7$ and $W\ominus
\lambda W$  consists of smooth functions. Hence $W\cap \overline{W}$ is a $7$-dimensional subalgebra of $H$ with respect to the product induced by the octonionic product on $\mathbb{R}^7$; consequently, for any $\lambda \in S^1$, the evaluation map
 at $\lambda$, ${\rm{ev}}_\lambda:W\cap
\overline{W}\to \mathbb{C}^7$, defines an isomorphism:
${\rm{ev}}_\lambda(\alpha\cdot\beta)=\alpha(\lambda)\cdot
\beta(\lambda)$. Set $\gamma(\lambda)={\rm{ev}}_\lambda\circ {\rm{ev}}^{-1}_1$. Since   ${\rm{ev}}_\lambda$ commutes with
complex conjugation and, as we have seen, ${\rm{ev}}_\lambda$ is an isomorphism for any $\lambda \in S^1$, then the
loop $\gamma$ belongs to $\Omega G_2$. By Remark \ref{rem}, we
have  $W=\gamma H_+$.
\end{proof}
\subsection{The algebraic Grassmannian}
A loop
 $\gamma \in\Omega U(n)$ is said to be algebraic if both $\gamma$ and $\gamma^{-1}$ have finite
Fourier series. Denote by
$\Omega_{\rm{alg}}U(n)$ the subgroup of algebraic loops. This
subgroup acts on
$${Gr}_{\mathrm{alg}}=\{W \in {Gr}:\,
\lambda^k H_+ \subseteq W\subseteq \lambda^{-k}
H_+\,\,\textrm{for some } k\in\mathbb{N} \},
$$
and we have  ${Gr}_{\mathrm{alg}}\cong
 \Omega_{\mathrm{alg}}{U}(n)$ (see \cite{PS} for details).

 \section{The Bruhat Decomposition of ${Gr}_{\mathrm{alg}}(G)$}

Consider now a compact matrix semi-simple Lie group $G$. Fix a maximal torus $T$ of $G$ with Lie algebra $\mathfrak{t}\subset \mathfrak{g}$, let $\Delta\subset \sqrt{-1} \mathfrak{t}^*$ be the corresponding
set of roots and, for any $\alpha\in \Delta$, denote by $\g_\alpha$ the corresponding root space. The integer lattice $I =
(2\pi)^{-1} \exp^{-1}(e)\cap \mathfrak{t}$ may be identified with the group of homomorphisms $S^1\to  T$, by
associating to $\xi\in I$ the homomorphism
$\gamma_\xi$ defined by $\gamma_\xi(\lambda)=\exp{(-\sqrt{-1}\ln(\lambda)\xi)}$. Denote by $\g^\xi_i$ the $\sqrt{-1}\,i$-eigenspace of $\mathrm{ad}{\xi}$, with $i\in\mathbb{Z}$.
We have on $\mathfrak{g}^\C$ a structure of graded Lie algebra:
\begin{equation*}
\g^\C=\!\!\!\bigoplus_{i\in\{-r(\xi)\ldots,r(\xi)\}}\!\!\!\mathfrak{g}^\xi_i,\quad [\mathfrak{g}^\xi_i, \mathfrak{g}^\xi_j]\subset \mathfrak{g}^\xi_{i+j},
\end{equation*}
 where $r(\xi)=\mathrm{max}\{i\,|\,\,\g_i^\xi\neq 0\}$. Moreover:
 \begin{equation}\label{gis}
\g_i^\xi=\!\!\bigoplus_{\alpha(\xi)=\sqrt{-1}\,i}\!\!\g_\alpha.
\end{equation}

The adjoint action of $\gamma_\xi$ on $\g^\C$  is given by:
\begin{lem}\label{adj}
 \emph{ For each $X_j\in \g^\xi_j$,
  $\gamma_\xi X_j\gamma_\xi^{-1}=\lambda^jX_j.$}
\end{lem}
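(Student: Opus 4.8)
The plan is to read the left-hand side as the adjoint action of the group element $\gamma_\xi(\lambda)$ on $X_j$ and then to exploit the relation $\mathrm{Ad}\circ\exp=\exp\circ\,\mathrm{ad}$, valid for a matrix group. Since by definition $\gamma_\xi(\lambda)=\exp(-\sqrt{-1}\ln(\lambda)\xi)$, conjugation yields
$$\gamma_\xi X_j \gamma_\xi^{-1} = \mathrm{Ad}\big(\exp(-\sqrt{-1}\ln(\lambda)\xi)\big)X_j = \exp\big(\mathrm{ad}(-\sqrt{-1}\ln(\lambda)\xi)\big)X_j = \exp\big(-\sqrt{-1}\ln(\lambda)\,\mathrm{ad}(\xi)\big)X_j.$$
So the whole computation is reduced to understanding how the one-parameter flow generated by $\mathrm{ad}(\xi)$ acts on $X_j$.

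First I would invoke the defining property of $X_j\in\g_j^\xi$: since $\g_j^\xi$ is the $\sqrt{-1}\,j$-eigenspace of $\mathrm{ad}\,\xi$, we have $\mathrm{ad}(\xi)X_j=[\xi,X_j]=\sqrt{-1}\,j\,X_j$. Consequently $X_j$ is an eigenvector for every power of $\mathrm{ad}(\xi)$, so the exponential series collapses to the scalar action $\exp\big(-\sqrt{-1}\ln(\lambda)\,\mathrm{ad}(\xi)\big)X_j=\exp\big(-\sqrt{-1}\ln(\lambda)\cdot\sqrt{-1}\,j\big)X_j=\exp\big(j\ln(\lambda)\big)X_j=\lambda^jX_j$, which is exactly the asserted formula. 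This is the entirety of the argument; it is a direct linear-algebra computation once the eigenvalue relation is in hand.

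The one point requiring genuine care — and the only place the hypotheses are really used — is the multivaluedness of $\ln(\lambda)$. I would make this rigorous by writing $\lambda=e^{\sqrt{-1}\theta}$ and noting that, because $\xi$ lies in the integer lattice $I=(2\pi)^{-1}\exp^{-1}(e)\cap\mathfrak{t}$, we have $\exp(2\pi\xi)=e$, so $\gamma_\xi(e^{\sqrt{-1}(\theta+2\pi)})=\gamma_\xi(e^{\sqrt{-1}\theta})$ and $\gamma_\xi$ is a well-defined loop on $S^1$ independent of the chosen branch. Correspondingly, since the eigenvalue index $j$ is an integer, the final scalar $\exp(j\ln(\lambda))=\lambda^j$ is single-valued, so no ambiguity survives. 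Beyond this branch bookkeeping there is no substantive obstacle.
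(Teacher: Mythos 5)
Your proof is correct and follows essentially the same route as the paper: both reduce the conjugation to $\mathrm{Ad}\big(\exp(\eta)\big)=\exp\big(\mathrm{ad}(\eta)\big)$ and then collapse the exponential series using the eigenvalue relation $\mathrm{ad}(\xi)X_j=\sqrt{-1}\,j\,X_j$, yielding $\exp\big(j\ln(\lambda)\big)X_j=\lambda^jX_j$. Your additional remark on the single-valuedness of the final expression (via $\xi\in I$ and $j\in\mathbb{Z}$) is a small point of care that the paper leaves implicit, but it does not change the argument.
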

\begin{proof}
  Taking account the well known formula $\mathrm{Ad}\big(\exp(\eta)\big)=\exp\big(\mathrm{ad}(\eta)\big)$, for all $\eta\in\g^\C$, we have:
  \begin{align*}
    \gamma_\xi X_j\gamma_\xi^{-1}&= \exp{\big(-\sqrt{-1}\ln(\lambda)\xi\big)}X_j\exp{\big(\sqrt{-1}\ln(\lambda)\xi\big)}=\exp\big(-\sqrt{-1}\ln(\lambda)\mathrm{ad}(\xi)\big)X_j\\
    &=\sum_{n\geq 0} \frac{\big(-\sqrt{-1}\ln(\lambda)\big)^n}{n!}(\mathrm{ad}\,\xi)^nX_j=\sum_{n\geq 0} \frac{\big(j\ln(\lambda)\big)^n}{n!}X_j=\lambda^jX_j.
  \end{align*}
\end{proof}

Set $\Lambda^+G^\C=\{\gamma:S^1\to G^\C\,|\,\,\mbox{$\gamma$ extends holomorphically for $|\lambda|\leq 1$}\}$.
For each
$\xi\in I$, we write
$\Omega_\xi=\{g\gamma_\xi g^{-1} \,|\,\, g \in G\},$ the conjugacy class of homomorphisms $S^1\to G$ which contains $\gamma_\xi$.
This is a complex homogeneous space \cite{BG}:
$$\Omega_\xi\cong G^\C\big/P_\xi,\,\mbox{with}\,P_\xi=G^\C\cap \gamma_\xi\Lambda^+G^\C\gamma_\xi^{-1}.$$
Taking account Lemma \ref{adj}, one can easily check  that the Lie algebra of the isotropic subgroup $P_\xi=G^\C\cap \gamma_\xi\Lambda^+G^\C\gamma_\xi^{-1}$ is precisely the parabolic subalgebra induced by $\xi$:
$\mathfrak{p}_\xi=\bigoplus_{i\leq 0}\g^\xi_i$.

Now, choose a fundamental Weyl  chamber $\mathcal{W}$  in $\mathfrak{t}$. The intersection of $I$ with this will be
denoted by $I'$. We have:
\begin{thm}\cite{PS}
 \emph{Bruhat decomposition: }$Gr_\mathrm{alg}(G)=\bigcup_{\xi\in I'}\Lambda^+_\mathrm{alg}G^\C\gamma_\xi H_+$.
\end{thm}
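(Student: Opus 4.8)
The plan is to read the stated identity as the Grassmannian incarnation of the Birkhoff--Bruhat decomposition of the algebraic complexified loop group $\Lambda_{\mathrm{alg}}G^\C$. Two steps are needed: an algebraic Iwasawa-type factorization identifying $Gr_{\mathrm{alg}}(G)$ with a homogeneous space of $\Lambda_{\mathrm{alg}}G^\C$, under which the points $\gamma_\xi H_+$ become the obvious coset representatives; and the decomposition of that homogeneous space into orbits of the positive loop group $\Lambda^+_{\mathrm{alg}}G^\C$, indexed by $I'$.

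First I would establish the factorization $\Lambda_{\mathrm{alg}}G^\C=\Omega_{\mathrm{alg}}G\cdot\Lambda^+_{\mathrm{alg}}G^\C$ together with $\Lambda_{\mathrm{alg}}G\cap\Lambda^+_{\mathrm{alg}}G^\C=G$ (a loop valued in the compact group $G$ that extends holomorphically over the disk is constant, by the maximum principle applied in the fundamental representation). These give the identification
$$Gr_{\mathrm{alg}}(G)\cong\Omega_{\mathrm{alg}}G\cong\Lambda_{\mathrm{alg}}G^\C\big/\Lambda^+_{\mathrm{alg}}G^\C,$$
under which $\gamma_\xi H_+$ corresponds to the coset $\gamma_\xi\Lambda^+_{\mathrm{alg}}G^\C$ and the orbit $\Lambda^+_{\mathrm{alg}}G^\C\gamma_\xi H_+$ corresponds to the $\Lambda^+_{\mathrm{alg}}G^\C$-orbit of that coset. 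The theorem is then equivalent to the double-coset (Bruhat) statement $\Lambda_{\mathrm{alg}}G^\C=\bigcup_{\xi\in I'}\Lambda^+_{\mathrm{alg}}G^\C\,\gamma_\xi\,\Lambda^+_{\mathrm{alg}}G^\C$.

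I would prove the covering directly on the Grassmannian, first for $GL(n,\C)$ and then descending to $G$. Given $W\in Gr_{\mathrm{alg}}$, the condition $\lambda^kH_+\subseteq W\subseteq\lambda^{-k}H_+$ reduces everything to finite-dimensional data. To $W$ I would attach the dimensions $d_j=\dim\big((W\cap\lambda^jH_+ + \lambda^{j+1}H_+)\big/\lambda^{j+1}H_+\big)$, which assemble into a dominant coweight $\xi$ of $GL(n,\C)$ such that $\gamma_\xi H_+$ has the same invariants. Choosing a basis of $W\ominus\lambda W$ adapted to the $\lambda$-filtration and successively cancelling higher-order terms by the action of positive loops then produces $\gamma_+\in\Lambda^+_{\mathrm{alg}}GL(n,\C)$ with $W=\gamma_+\gamma_\xi H_+$; this is the loop-group incarnation of the Birkhoff factorization.

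The main obstacle is the descent to $G$. For $W\in Gr_{\mathrm{alg}}(G)$ one must upgrade the $GL(n,\C)$-reduction to one effected by $\Lambda^+_{\mathrm{alg}}G^\C$ and with representative $\gamma_\xi$ carrying $\xi\in I'$, that is, a coweight of $G$ dominant for the chamber $\mathcal{W}$. Here the $G$-structure of $W$---the relation $\overline{W}^\perp=\lambda W$ for $SO(7)$ and the further condition $W^{sm}\cdot W^{sm}\subseteq W^{sm}$ for $G_2$---must be used to guarantee that the adapted basis and the cancelling loops can be chosen inside $G^\C$, while the root and coweight data (\ref{roots})--(\ref{Hs}) single out the admissible $\xi$. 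This compatibility, together with the uniqueness of the dominant representative modulo the Weyl group (which also yields disjointness of the cells), is exactly the content established by Pressley and Segal \cite{PS}, whose argument adapts to the present setting just as in the Grassmannian models of Section 3.
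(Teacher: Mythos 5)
The paper offers no proof of this statement: it is quoted directly from Pressley and Segal \cite{PS}, so there is no argument in the text to compare yours against. Judged on its own terms, the first two steps of your outline are fine and standard: the identification $Gr_{\mathrm{alg}}(G)\cong\Omega_{\mathrm{alg}}G\cong\Lambda_{\mathrm{alg}}G^\C/\Lambda^+_{\mathrm{alg}}G^\C$ (transitivity of the loop group on the Grassmannian plus $\Lambda G\cap\Lambda^+G^\C=G$) is already part of the paper's setup, and the elementary-divisor argument you sketch for $GL(n,\C)$ --- reading off the jumps of the filtration $W\cap\lambda^jH_+$ and cancelling higher-order terms by positive loops --- does produce $W=\gamma_+\gamma_\xi H_+$ with $\xi$ a dominant coweight of $GL(n)$. (A point of nomenclature: what you describe is the Bruhat reduction; the Birkhoff factorization is the $\Lambda^-\gamma_\xi\Lambda^+$ decomposition, whose strata have finite \emph{co}dimension rather than finite dimension, so the label is misleading even though the procedure you describe is the right one.)

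The genuine gap is the descent to $G$, which is where the entire content of the theorem lies and which you dispose of by asserting that the $G$-structure of $W$ ``must be used to guarantee'' that the cancelling loops can be chosen in $\Lambda^+_{\mathrm{alg}}G^\C$, and then citing \cite{PS} for ``this compatibility''. That is circular as a proof: you have reduced the statement to exactly the assertion you were asked to establish and handed it back to the same reference the paper cites. For a general compact semisimple $G$ the decomposition is not obtained by adapting the $GL(n)$ lattice argument inside a faithful representation; Pressley and Segal derive it from the Tits-system (Iwahori/affine Weyl group) structure of $\Lambda_{\mathrm{alg}}G^\C$, equivalently from the Morse-theoretic stratification of $\Omega_{\mathrm{alg}}G$ by unstable manifolds of the energy functional recalled in the paper's remark following the theorem. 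If you wanted a Grassmannian-model proof for $G_2$ specifically, you would have to show concretely that the conditions $\overline{W}^{\perp}=\lambda W$ and $W^{sm}\cdot W^{sm}\subseteq W^{sm}$ force the flag of jumps $A_i$ to have the shape (\ref{wxii}) for some $\xi$ of the form (\ref{xi}), and that the reducing positive loop can be taken to preserve these conditions; none of that appears in your proposal.
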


Define $U_\xi\subset \Omega_{\mathrm{alg}}G$ by  $U_\xi H_+=\Lambda^+_{\mathrm{alg}}G^\C\gamma_\xi H_+.$ This is a complex homogeneous space of the
 group $\Lambda^+_{\mathrm{alg}}G^\C$, and the isotropy subgroup at $\gamma_\xi$ is the subgroup
$\Lambda^+_{\mathrm{alg}}G^\C\cap \gamma_\xi \Lambda^+G^\C \gamma_\xi^{-1}.$ Moreover,  $U_\xi$ carries a structure of holomorphic vector bundle over $\Omega_\xi$ and the bundle map
$u_\xi:U_\xi\to \Omega_\xi$ is precisely the natural map
\begin{equation*}
\Lambda^+_{\mathrm{alg}}G^\C\Big/ \Lambda^+_{\mathrm{alg}}G^\C\cap \gamma_\xi \Lambda^+G^\C \gamma_\xi^{-1}\to G^\C \big/ P_\xi
\end{equation*}
given by  $[\gamma]\mapsto [\gamma(0)]$ (see \cite{BG} for details).

\begin{rem} As explained  in \cite{PS} (see also \cite{BG}), the Bruhat decomposition admits a nice Morse theoretic approach. Consider the usual energy functional on paths $E:\Omega G\to \mathbb{R}$. This is a Morse-Bott function and its critical manifolds are precisely the
conjugacy classes of homomorphisms $S^1\to G$. For each $\xi\in I$, $U_\xi$ is the unstable manifold of $\Omega_\xi$
with respect to the flow of the gradient vector field $-\nabla E$ defined by the natural K\"{a}hler structure on $\Omega G$, and each $\gamma\in U_\xi$ flows to the homomorphism $u_\xi(\gamma)$.

\end{rem}

Take $\gamma\in U_\xi\subset  \Omega_{\mathrm{alg}}G$ and $W=\gamma H_+\in Gr_{\mathrm{alg}}(G)$, with $\lambda^rH_+\subset W\subset\lambda^{-s}H_+$.
Fix $\Psi\in \Lambda^+_{\mathrm{alg}}G^\C$ such that
$W=\Psi\gamma_\xi H_+$. Write
$$\gamma_\xi H_+=\lambda^{-s}A^\xi_{-s}+\ldots+\lambda^{r-1}A^\xi_{r-1}+\lambda^rH_+,$$ where the subspaces $A^\xi_i$ define a flag
$$\{0\}=A^\xi_{-s-1}\subset A^\xi_{-s}\subseteq A^\xi_{-s+1}\subseteq \ldots\subseteq A^\xi_{r-1}\subset A^\xi_r=\C^n.$$
In terms of the grassmannian model, the  bundle map $u_\xi:U_\xi\to\Omega_\xi$ can be described as follows:
 \begin{equation}\label{popo1}
u_\xi(W)=\lambda^{-s}A_{-s}+\ldots+\lambda^{r-1}A_{r-1}+\lambda^rH_+,
\end{equation}
 with $$A_i=\Psi(0)A^\xi_i= p_i(W\cap\lambda^iH_+),$$ where $p_i: H \to\C^n$ is defined by $p_i(\sum\lambda^ja_j)=a_i$.

\begin{rem}
Consider the mutually orthogonal subspaces $E_{-s},\ldots,E_r$ defined by
$E_i={A^\xi_i}\cap {{A^{\xi}}_{i-1}^{\perp}}$. For each $v\in E_i$, we have $\xi v =\sqrt{-1} iv$. Hence,
\begin{equation}\label{ioio}
\g^\xi_i=\!\!\!\bigoplus_{j\in\{-s,\ldots,r\}}\!\!\!\mathrm{Hom}(E_j,E_{j+i})\cap \g^\C.
\end{equation}
\end{rem}

\subsection{Canonical fibre-bundle holomorphic morphisms between unstable manifolds}
In this section we  show that, under some conditions on a pair of elements $\xi,\xi'\in I$, one can construct a canonical holomorphic morphism between the fibre bundles $U_\xi\to\Omega_\xi$ and $U_{\xi'}\to\Omega_{\xi'}$. Later we shall use these morphisms to construct canonical factorizations of algebraic loops and harmonic maps.

Define a partial order over $I$ by:
\begin{equation}\label{gs}
\xi\preceq \xi'\,\,\,\,\,\,\,\, \mbox{if}\,\,\,\,\,\,\,\, \mathfrak{p}^{\xi}_i\subset \mathfrak{p}^{\xi'}_i,
\end{equation}
for all $i\geq 0$, where $\mathfrak{p}_i^\xi=\bigoplus_{j\leq i}\g_j^\xi$.  Since $\xi$ and $\xi'$ are simultaneously diagonalizable, this condition is equivalent to
\begin{equation}\label{gs1}
\g^\xi_j=\bigoplus_{0\leq k\leq j} \g^\xi_j\cap\g^{\xi'}_k,
\end{equation}
for all $j\geq 0$.

\begin{lem}\label{lema3}\emph{Take two elements $\xi,\xi'\in I$ such that $\xi\preceq \xi'$. Then
   $$\Lambda^+_{\mathrm{alg}}G^\C\cap \gamma_\xi \Lambda^+G^\C \gamma_\xi^{-1}\subset \Lambda^+_{\mathrm{alg}}G^\C\cap \gamma_{\xi'} \Lambda^+G^\C \gamma_{\xi'}^{-1}.$$}
\end{lem}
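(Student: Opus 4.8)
The plan is to prove the inclusion at the level of Lie algebras, where it is immediate from the definition \eqref{gs} of the order $\preceq$, and then to lift it to the groups using that the isotropy subgroups involved are connected. Write $Q_\xi=\Lambda^+_{\mathrm{alg}}G^\C\cap \gamma_\xi\Lambda^+G^\C\gamma_\xi^{-1}$ and $\mathfrak{q}_\xi=\mathrm{Lie}(Q_\xi)=\Lambda^+_{\mathrm{alg}}\g^\C\cap\gamma_\xi\Lambda^+\g^\C\gamma_\xi^{-1}$, and similarly for $\xi'$; since any $g\in Q_\xi$ already lies in $\Lambda^+_{\mathrm{alg}}G^\C$, it suffices to show $Q_\xi\subset\gamma_{\xi'}\Lambda^+G^\C\gamma_{\xi'}^{-1}$.

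First I would identify $\mathfrak{q}_\xi$ explicitly. Any $X\in\Lambda^+_{\mathrm{alg}}\g^\C$ is a polynomial loop $X=\sum_{n\ge0}X_n\lambda^n$; decomposing each coefficient along the $\xi$-grading as $X_n=\sum_jX_n^{(j)}$ with $X_n^{(j)}\in\g^\xi_j$, Lemma \ref{adj} gives $\gamma_\xi^{-1}X\gamma_\xi=\sum_{n,j}\lambda^{\,n-j}X_n^{(j)}$. This lies in $\Lambda^+\g^\C$ exactly when $X_n^{(j)}=0$ for $j>n$, i.e. when $X_n\in\bigoplus_{j\le n}\g^\xi_j=\mathfrak{p}^\xi_n$ for every $n$. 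Hence
$$\mathfrak{q}_\xi=\Big\{\textstyle\sum_{n\ge0}X_n\lambda^n:\ X_n\in\mathfrak{p}^\xi_n\ \text{for all }n\ge0\Big\},$$
and the same description holds for $\xi'$. Now the hypothesis $\xi\preceq\xi'$, read in the form \eqref{gs}, says precisely $\mathfrak{p}^\xi_n\subset\mathfrak{p}^{\xi'}_n$ for all $n\ge0$; therefore each coefficient $X_n\in\mathfrak{p}^\xi_n\subset\mathfrak{p}^{\xi'}_n$, which yields $\mathfrak{q}_\xi\subset\mathfrak{q}_{\xi'}$ at once.

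It remains to promote this to the groups, and this is the only real point. Since $Q_\xi$ is the stabilizer of $\gamma_\xi$ under the action of the connected group $\Lambda^+_{\mathrm{alg}}G^\C$ on $U_\xi$, and $U_\xi$ is a vector bundle over the (simply connected) flag manifold $\Omega_\xi$, the homotopy exact sequence of the fibration $Q_\xi\hookrightarrow\Lambda^+_{\mathrm{alg}}G^\C\to U_\xi$ forces $\pi_0(Q_\xi)=0$, so $Q_\xi$ is connected; the same holds for $Q_{\xi'}$. A connected Lie group is generated by the image of its Lie algebra under $\exp$, so from $\mathfrak{q}_\xi\subset\mathfrak{q}_{\xi'}$ we get $\exp(\mathfrak{q}_\xi)\subset\exp(\mathfrak{q}_{\xi'})\subset Q_{\xi'}$ and hence $Q_\xi\subset Q_{\xi'}$, which is the assertion.

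The main obstacle is exactly this last step: the clean infinitesimal inclusion must be transported across the exponential map, and for that one needs the connectedness of $Q_\xi$ (equivalently, a direct verification that an element $g\in Q_\xi$, factored as $g=g(0)\exp(Z)$ with $g(0)\in P_\xi$ and $Z$ vanishing at $\lambda=0$, has all its data $g(0)$ and the coefficients of $Z$ subject to conditions governed by $\mathfrak{p}^\xi_\bullet$, which then improve to $\mathfrak{p}^{\xi'}_\bullet$). Everything upstream of this is a direct reading of Lemma \ref{adj} and of the definition of $\preceq$.
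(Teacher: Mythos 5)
Your proof is correct and follows essentially the same route as the paper's: both reduce to the Lie algebra level by connectedness of the isotropy subgroups, identify that Lie algebra via Lemma \ref{adj} as $\bigoplus_{n\geq 0}\lambda^n\mathfrak{p}^\xi_n$, and conclude directly from the definition (\ref{gs}) of $\preceq$. The only difference is that you supply a justification (via the homotopy sequence of $Q_\xi\hookrightarrow\Lambda^+_{\mathrm{alg}}G^\C\to U_\xi$ and simple connectedness of $\Omega_\xi$) for the connectedness that the paper merely asserts.
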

\begin{proof} Since these subgroups are connected, it is sufficient to prove the inclusion at the Lie algebra level.
Now, Lemma \ref{adj} yields
\begin{equation}\label{lal}
\Lambda^+_{\mathrm{alg}}\g^\C\cap \gamma_\xi \Lambda^+\g^\C \gamma_\xi^{-1}=\bigoplus_{i\geq 0 }\lambda^i\mathfrak{p}^\xi_i.
\end{equation}
From (\ref{gs}) and (\ref{lal}) we conclude that
$$\Lambda^+_{\mathrm{alg}}\g^\C\cap \gamma_\xi \Lambda^+\g^\C \gamma_\xi^{-1}\subset \Lambda^+_{\mathrm{alg}}\g^\C\cap \gamma_{\xi'} \Lambda^+\g^\C \gamma_{\xi'}^{-1}.$$
\end{proof}
This lemma allows us to define a $\Lambda^+_{\mathrm{alg}}G^\C$-invariant fibre bundle morphism
$\mathcal{U}_{\xi,\xi'}:U_\xi\to U_{\xi'}$  by
\begin{equation}\label{uxi}
\mathcal{U}_{\xi,\xi'}(\Psi\gamma_{\xi}H_+)=\Psi\gamma_{\xi'}H_+, \quad \Psi\in\Lambda^+_{\mathrm{alg}}G^\C,\end{equation}
whenever  $\xi\preceq \xi'$.
Since the holomorphic structures on  $U_\xi$ and $U_{\xi'}$ are induced by the holomorphic structure on $\Lambda^+_{\mathrm{alg}}G^\C$, the fibre-bundle morphism  $\mathcal{U}_{\xi,\xi'}$ is holomorphic. Moreover:

For each $\xi\in I$ define the subbundle
\begin{equation}\label{h10}
H^{1,0}U_\xi\cong \Lambda^+_{\mathrm{alg}}G^\C\times_{\Lambda^+_{\mathrm{alg}}G^\C\cap \gamma_\xi \Lambda^+G^\C \gamma_\xi^{-1}}
\Lambda^+_{\mathrm{alg}}\g^\C\cap \lambda^{-1} \gamma_\xi \Lambda^+\g^\C \gamma_\xi^{-1}\Big/ \Lambda^+_{\mathrm{alg}}\g^\C\cap \gamma_\xi \Lambda^+\g^\C \gamma_\xi^{-1}
\end{equation}
of the holomorphic tangent bundle
\begin{equation}\label{t10}
T^{1,0}U_\xi\cong \Lambda^+_{\mathrm{alg}}G^\C\times_{\Lambda^+_{\mathrm{alg}}G^\C\cap \gamma_\xi \Lambda^+G^\C \gamma_\xi^{-1}} \Lambda^+_{\mathrm{alg}}\g^\C\Big/  \Lambda^+_{\mathrm{alg}}\g^\C\cap \gamma_\xi \Lambda^+\g^\C \gamma_\xi^{-1}.
\end{equation}
We have:
\begin{lem}
\emph{If $\xi\preceq \xi'$, then $\mathcal{U}_{\xi,\xi'}$ is \emph{super-horizontal}, that is, $D\mathcal{U}_{\xi,\xi'}(H^{1,0}U_\xi)\subset H^{1,0}U_{\xi'}$. }
\end{lem}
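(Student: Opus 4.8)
The plan is to reduce the statement to a purely Lie-theoretic inclusion at a single base point and then spread it over $U_\xi$ by equivariance. By Lemma \ref{lema3}, the hypothesis $\xi\preceq\xi'$ gives $P_\xi\subset P_{\xi'}$, where $P_\xi=\Lambda^+_{\mathrm{alg}}G^\C\cap\gamma_\xi\Lambda^+G^\C\gamma_\xi^{-1}$; hence $\mathcal{U}_{\xi,\xi'}$ is precisely the natural $\Lambda^+_{\mathrm{alg}}G^\C$-equivariant projection $\Lambda^+_{\mathrm{alg}}G^\C/P_\xi\to\Lambda^+_{\mathrm{alg}}G^\C/P_{\xi'}$. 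First I would identify its differential at the identity coset: it is the tautological quotient map $\Lambda^+_{\mathrm{alg}}\g^\C/\mathfrak{q}_\xi\to\Lambda^+_{\mathrm{alg}}\g^\C/\mathfrak{q}_{\xi'}$ sending $[X]_{\mathfrak{q}_\xi}$ to $[X]_{\mathfrak{q}_{\xi'}}$, where $\mathfrak{q}_\xi=\Lambda^+_{\mathrm{alg}}\g^\C\cap\gamma_\xi\Lambda^+\g^\C\gamma_\xi^{-1}$ is the isotropy subalgebra, consistently with (\ref{t10}).

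The next step is to compute the two graded models involved. Formula (\ref{lal}) already records $\mathfrak{q}_\xi=\bigoplus_{i\geq0}\lambda^i\mathfrak{p}^\xi_i$. Running the same bookkeeping on $\lambda^{-1}\gamma_\xi\Lambda^+\g^\C\gamma_\xi^{-1}$ via Lemma \ref{adj} — that is, shifting every $\lambda$-degree down by one — yields $\mathfrak{h}_\xi:=\Lambda^+_{\mathrm{alg}}\g^\C\cap\lambda^{-1}\gamma_\xi\Lambda^+\g^\C\gamma_\xi^{-1}=\bigoplus_{i\geq0}\lambda^i\mathfrak{p}^\xi_{i+1}$, which is exactly the fibre appearing in the definition (\ref{h10}) of $H^{1,0}U_\xi$. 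With these identifications the differential above carries $H^{1,0}_{[e]}U_\xi\cong\mathfrak{h}_\xi/\mathfrak{q}_\xi$ into $H^{1,0}_{[e]}U_{\xi'}\cong\mathfrak{h}_{\xi'}/\mathfrak{q}_{\xi'}$ as soon as $\mathfrak{h}_\xi\subset\mathfrak{h}_{\xi'}$, so the whole lemma comes down to this single inclusion of subalgebras.

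The inclusion is where $\xi\preceq\xi'$ is used, and it is immediate once the models are in hand: comparing the direct sums term by term, $\mathfrak{h}_\xi\subset\mathfrak{h}_{\xi'}$ is equivalent to $\mathfrak{p}^\xi_{i+1}\subset\mathfrak{p}^{\xi'}_{i+1}$ for all $i\geq0$, i.e. $\mathfrak{p}^\xi_j\subset\mathfrak{p}^{\xi'}_j$ for all $j\geq1$, which is a special case of the defining condition (\ref{gs}). To finish, since $H^{1,0}U_\xi$ and $H^{1,0}U_{\xi'}$ are the homogeneous subbundles associated to the invariant fibres $\mathfrak{h}_\xi/\mathfrak{q}_\xi$ and $\mathfrak{h}_{\xi'}/\mathfrak{q}_{\xi'}$, and $\mathcal{U}_{\xi,\xi'}$ is equivariant, the base-point inclusion is transported by the $\Lambda^+_{\mathrm{alg}}G^\C$-action to every point, giving $D\mathcal{U}_{\xi,\xi'}(H^{1,0}U_\xi)\subset H^{1,0}U_{\xi'}$.

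I expect no serious obstacle: once the associated-bundle language is fixed the statement is formal. The only point demanding care is the index shift in the middle step — verifying that premultiplication by $\lambda^{-1}$ turns the filtration $\mathfrak{p}^\xi_i$ into $\mathfrak{p}^\xi_{i+1}$, so that $H^{1,0}U_\xi$ is modelled on $\bigoplus_{i\geq0}\lambda^i\mathfrak{p}^\xi_{i+1}$ — together with checking that the defining inequality (\ref{gs}) indeed covers all indices $j\geq1$ needed, which it does since it is assumed for every $i\geq0$.
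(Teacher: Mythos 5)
Your proposal is correct and follows essentially the same route as the paper: both identify the fibre of $H^{1,0}U_\xi$ with $\bigoplus_{i\geq 0}\lambda^i\mathfrak{p}^\xi_{i+1}$ modulo the isotropy algebra, observe that the derivative of $\mathcal{U}_{\xi,\xi'}$ is the tautological map $[\eta]\mapsto[\eta]$, and deduce the inclusion of horizontal subspaces directly from the defining condition (\ref{gs}). The only difference is that you spell out the equivariance step explicitly, which the paper leaves implicit.
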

\begin{proof}
 Start to observe that
 $$\Lambda^+_{\mathrm{alg}}\g^\C\cap \lambda^{-1} \gamma_\xi \Lambda^+\g^\C \gamma_\xi^{-1}=\bigoplus_{i\geq 0}\lambda^i \p^\xi_{i+1}.$$
 Hence, by (\ref{gs}),
 \begin{equation}\label{d}
   \Lambda^+_{\mathrm{alg}}\g^\C\cap \lambda^{-1} \gamma_\xi \Lambda^+\g^\C \gamma_\xi^{-1}\subset\Lambda^+_{\mathrm{alg}}\g^\C\cap \lambda^{-1} \gamma_{\xi'} \Lambda^+\g^\C \gamma_{\xi'}^{-1}.
 \end{equation}
 On the other hand, the derivative  $D\mathcal{U}_{\xi,\xi'}:T^{1,0}U_\xi\to T^{1,0}U_{\xi'} $, which  corresponds to  a map
$$\Lambda^+_{\mathrm{alg}}\g^\C\Big/  \Lambda^+_{\mathrm{alg}}\g^\C\cap \gamma_\xi \Lambda^+\g^\C \gamma_\xi^{-1}\to
\Lambda^+_{\mathrm{alg}}\g^\C\Big/  \Lambda^+_{\mathrm{alg}}\g^\C\cap \gamma_{\xi'} \Lambda^+\g^\C \gamma_{\xi'}^{-1},$$
is simply given by $[\eta]\mapsto [\eta]$, and  the lemma follows from inclusion (\ref{d}).
\end{proof}

Similarly, the derivative $Du_\xi:T^{1,0}U_\xi\to T^{1,0}\Omega_\xi\subset T^{1,0}U_\xi$ corresponds to a map
 $$\Lambda^+_{\mathrm{alg}}\g^\C\Big/  \Lambda^+_{\mathrm{alg}}\g^\C\cap \gamma_\xi \Lambda^+\g^\C \gamma_\xi^{-1}\to \g^\C/\mathfrak{p}_\xi\hookrightarrow \Lambda^+_{\mathrm{alg}}\g^\C\Big/  \Lambda^+_{\mathrm{alg}}\g^\C\cap \gamma_\xi \Lambda^+\g^\C \gamma_\xi^{-1}$$ and   $u_\xi$  is super-horizontal as well. In fact:
\begin{lem}\cite{BG}
\emph{ The map $Du_\xi:T^{1,0}U_\xi\to T^{1,0}\Omega_\xi\subset T^{1,0}U_\xi$  is given by:
  $$[\lambda^j\eta]\mapsto \left\{\begin{array}{ll}
        0 & \mbox{if $j>0$} \\
         \mbox{$[\eta]$}  &  \mbox{if $j=0$}
        \end{array}\right.. $$}
  \end{lem}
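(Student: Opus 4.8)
The plan is to realize $u_\xi$ as the quotient map induced by the evaluation homomorphism and then differentiate formally. Recall that in the homogeneous presentations $U_\xi=\Lambda^+_{\mathrm{alg}}G^\C/(\Lambda^+_{\mathrm{alg}}G^\C\cap\gamma_\xi\Lambda^+G^\C\gamma_\xi^{-1})$ and $\Omega_\xi=G^\C/P_\xi$ the bundle map is $u_\xi([\gamma])=[\gamma(0)]$, which is precisely the map on quotients induced by the group homomorphism $\mathrm{ev}_0:\Lambda^+_{\mathrm{alg}}G^\C\to G^\C$, $\gamma\mapsto\gamma(0)$, sending a loop to its constant Fourier term. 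My first step is to check that $\mathrm{ev}_0$ carries the isotropy subgroup $\Lambda^+_{\mathrm{alg}}G^\C\cap\gamma_\xi\Lambda^+G^\C\gamma_\xi^{-1}$ into $P_\xi$, so that this quotient map is well defined; I would do this at the Lie algebra level, where by (\ref{lal}) the isotropy Lie algebra equals $\bigoplus_{i\geq 0}\lambda^i\mathfrak{p}_i^\xi$, whose constant term (the summand $i=0$) is exactly $\mathfrak{p}_0^\xi=\bigoplus_{j\leq 0}\g_j^\xi=\mathfrak{p}_\xi$, the Lie algebra of $P_\xi$.

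Next I would compute the derivative using the standard fact that the derivative at the base coset of a map between homogeneous spaces induced by a group homomorphism is the map on Lie algebra quotients induced by its differential; this runs exactly parallel to the computation of $D\mathcal{U}_{\xi,\xi'}$ carried out above, with $\mathrm{ev}_0$ replacing the identity. Since $\mathrm{ev}_0$ is a homomorphism, its differential at the identity is again evaluation at $\lambda=0$, that is $d(\mathrm{ev}_0)(\eta)=\eta(0)$, the constant Fourier coefficient. Applying this to a representative $\lambda^j\eta$ with $\eta\in\g^\C$ gives $0$ when $j>0$ and $\eta$ when $j=0$; composing with the inclusion $\g^\C/\mathfrak{p}_\xi\hookrightarrow\Lambda^+_{\mathrm{alg}}\g^\C/(\bigoplus_{i\geq0}\lambda^i\mathfrak{p}_i^\xi)$ that sends $[\eta]$ to the class of the constant loop $\eta=\lambda^0\eta$ then yields exactly the stated formula. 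Equivariance of $u_\xi$ under $\Lambda^+_{\mathrm{alg}}G^\C$, together with the fact that all the identifications (\ref{h10})--(\ref{t10}) are made equivariantly, reduces the verification to the base point $\gamma_\xi$, which is precisely what the displayed fibre description encodes.

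The argument is essentially formal once these identifications are fixed, so I do not anticipate a genuine obstacle; the one place that warrants care is the well-definedness established in the first step, namely confirming that the constant Fourier term of the isotropy Lie algebra is precisely $\mathfrak{p}_\xi$ and nothing larger, so that $Du_\xi$ really lands in $T^{1,0}\Omega_\xi\cong\g^\C/\mathfrak{p}_\xi$. This is exactly guaranteed by the grading convention $\mathfrak{p}_i^\xi=\bigoplus_{j\leq i}\g_j^\xi$ combined with the identification (\ref{lal}).
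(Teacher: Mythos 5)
Your argument is correct and matches the approach implicit in the paper: the paper states this lemma without proof (citing Burstall--Guest), but the sentence immediately preceding it already identifies $Du_\xi$ with the map on Lie-algebra quotients induced by $[\gamma]\mapsto[\gamma(0)]$, i.e.\ by evaluation at $\lambda=0$, which is exactly what you differentiate. The one substantive check --- that the constant Fourier term of the isotropy algebra $\bigoplus_{i\geq 0}\lambda^i\mathfrak{p}_i^\xi$ is precisely $\mathfrak{p}_0^\xi=\mathfrak{p}_\xi$, so the induced map lands in $\g^\C/\mathfrak{p}_\xi$ and the inclusion of constant loops is well defined and injective --- is carried out correctly in your first step.
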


\section{Factorizations of algebraic loops}\label{xixi}
For each integer $k\geq 0$, define
$$\Omega^kG_2=\big\{\gamma\in \Omega_{\mathrm{alg}}G_2:\,\gamma=\sum_{i=-k}^k\lambda^i\zeta_i,\,\,\zeta_k\neq 0 \big\}.$$
 By a \emph{factorization} of a loop $\gamma\in\Omega^kG_2$ of \emph{length} $N$ and \emph{type } $(k_1,k_2,\ldots,k_N)$ we mean a sequence of loops $\beta_1,\beta_2,\ldots,\beta_{N-1},\beta_N$ such that $\gamma=\beta_1\beta_2\ldots\beta_{N-1}\beta_N$ and
 $\beta_i\in \Omega^{k_i}G_2$ for each $i=1,\ldots,N$.

Since each element $\xi\in I$ is diagonalizable and its eigenvalues are of the form $\sqrt{-1}k_i(\xi)$, with $k_i(\xi)\in \mathbb{Z}$, we have $\gamma_\xi\in\Omega^{\kappa(\xi)} G_2$, where $\kappa(\xi)=\max \{|k_i(\xi)|\}$. Moreover, if $\gamma\in U_\xi$, then $\gamma\in \Omega^{\kappa(\xi)} G_2$.

Consider $\gamma\in U_\xi$ and a sequence of elements $\xi=\xi_N,\xi_{N-1},\ldots,\xi_2,\xi_1,\xi_0=0$ in the integer lattice $I$ such that $\xi\preceq \xi_i$ for each $i=0,\ldots,N$. Set $k_i=\kappa(\xi_{i})-\kappa(\xi_{i-1})$.
Thus,  the sequence of loops
$$\gamma_N=\gamma, \ldots, \gamma_{i}=\mathcal{U}_{\xi,\xi_{i}}(\gamma_{N}), \ldots, \gamma_0=e$$
  induces a factorization $\beta_1,\ldots,\beta_N$ of $\gamma$, with $\beta_i=\gamma_{i-1}^{-1}\gamma_i$, of length $N$ and type $(k_1,\ldots,k_N)$.


\subsection{Canonical factorizations of algebraic loops in $G_2$}
Fix a non-zero element $\xi$ such that $\exp(2\pi\xi)=e$. According to the notations of Section \ref{rep}, for some choice of a maximal  torus $T$ of $G_2$ and a Weyl chamber $\mathcal{W}$, $\xi$ is given by
\begin{equation}\label{xi}
\xi=\left\{\begin{array}{ll}
        \sqrt{-1}\,k & \mbox{on $L_1$} \\
        \sqrt{-1}\,l &  \mbox{on $L_2$}\\
        \sqrt{-1}\,(k-l) & \mbox{on $\overline{L}_3$}
      \end{array}\right.,\end{equation}
where $k,l$ are non-negative integers such that $2l \leq k$. For each $\gamma\in U_\xi$ we define the \emph{canonical factorization}  of $\gamma$ as follows:
\subsubsection{Case $0<2l<k$} All the eigenspaces of $\xi$ are $1$-dimensional; consequently, there is a unique pair $(T,\mathcal{W})$ for which (\ref{xi}) holds.
Define $\eta_1,\eta_2\in I$ by:
$$\eta_1=\left\{\begin{array}{ll}
        \sqrt{-1} & \mbox{on $L_1$} \\
        0 &  \mbox{on $L_2$}\\
        \sqrt{-1} & \mbox{on $\overline{L}_3$}
      \end{array}\right.\quad\quad \eta_2=\left\{\begin{array}{ll}
        \sqrt{-1} & \mbox{on $L_1$} \\
        \sqrt{-1} &  \mbox{on $L_2$}\\
        0 & \mbox{on $\overline{L}_3$}
      \end{array}\right.. $$
We have $\xi=(k-l)\eta_1+l\eta_2$ and $\kappa(\eta_1)=\kappa(\eta_2)=1$.
\begin{lem}\label{ord}
 \emph{ Consider the following sequence of elements in the integer lattice $I$:
\begin{align}
\nonumber \xi&=\xi_k, \ldots,\xi_{k-i}=(k-l-i)\eta_1+l\eta_2,\ldots, \xi_{2l}=l\eta_1+l\eta_2,\xi_{2l-1}=l\eta_1+(l-1)\eta_2,\ldots\\ \nonumber
&\ldots, \xi_{2(l-j)}=(l-j)\eta_1+(l-j)\eta_2,\xi_{2(l-j)-1}=(l-j)\eta_1+(l-j-1)\eta_2,\ldots\\ \label{2l<k}&\ldots,\xi_2=\eta_1+\eta_2,\xi_1=\eta_1,\xi_0=0
\end{align}
where $0 \leq i\leq k-2l$ and $0\leq j\leq l-1$. Then  $\xi\preceq \xi_r$ for each $r=0,\ldots, k$. }
  \end{lem}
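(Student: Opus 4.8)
The plan is to translate the partial order $\preceq$ into elementary inequalities on root values and then check these for every element of the sequence (\ref{2l<k}). I would work with the equivalent form (\ref{gs1}) of the order. Since $\xi$ and each $\xi_r$ lie in a common Cartan subalgebra and the root spaces $\g_\alpha$ are one-dimensional, $\g_j^\xi$ automatically splits into $\xi_r$-homogeneous pieces, so (\ref{gs1}) holds precisely when every positive root $\alpha$ with $\alpha(\xi)/\sqrt{-1}=j\geq 0$ satisfies $0\leq \alpha(\xi_r)/\sqrt{-1}\leq j$. Because $0<2l<k$, the element $\xi$ lies in the \emph{open} Weyl chamber, so every positive root takes a strictly positive value on it; hence $\xi\preceq\xi_r$ is equivalent to the single condition
$$0\leq \frac{\alpha(\xi_r)}{\sqrt{-1}}\leq \frac{\alpha(\xi)}{\sqrt{-1}}\quad\text{for every positive root }\alpha.$$

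Next I would compute these values explicitly. Matching the prescribed action on $L_1,L_2,\overline{L}_3$ against (\ref{Hs}) gives $\xi=lH_1+(k-2l)H_2$, $\eta_1=H_2$ and $\eta_2=H_1-H_2$, whence $\alpha_i(H_j)=\sqrt{-1}\delta_{ij}$ yields $\alpha_1(\xi)/\sqrt{-1}=l$, $\alpha_2(\xi)/\sqrt{-1}=k-2l$. Evaluating on the six positive roots (\ref{roots}), the value tuple for $\xi$ is $(l,\,k-2l,\,k-l,\,k,\,k+l,\,2k-l)$. For a general $\xi_r=p\eta_1+q\eta_2$ the same computation gives $\alpha_1(\xi_r)/\sqrt{-1}=q$, $\alpha_2(\xi_r)/\sqrt{-1}=p-q$, so its tuple is $(q,\,p-q,\,p,\,p+q,\,p+2q,\,2p+q)$. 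Thus $\xi\preceq\xi_r$ reduces to the inequalities
$$0\le q\le l,\quad 0\le p-q\le k-2l,\quad 0\le p\le k-l,$$
$$0\le p+q\le k,\quad 0\le p+2q\le k+l,\quad 0\le 2p+q\le 2k-l.$$

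I would then note that only the first three need checking, since the others follow: $p+q=(p-q)+2q\le (k-2l)+2l=k$, $p+2q\le(k-l)+2l=k+l$ and $2p+q\le 2(k-l)+l=2k-l$, while all lower bounds come from $q\ge 0$, $p-q\ge0$, $p\ge0$. Reading off the $(p,q)$ in (\ref{2l<k}): the first block has $q=l$ with $p$ decreasing from $k-l$ to $l$, and the second block alternates between $(m,m)$ and $(m,m-1)$ with $m$ decreasing from $l$ to $1$, followed by $(0,0)$. In every case $0\le q\le l$ and $0\le p\le k-l$ hold by inspection, while $p-q$ runs through $k-2l,\dots,0$ in the first block and lies in $\{0,1\}$ in the second.

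The one place where the strict inequality $2l<k$ is indispensable is the bound $p-q\le k-2l$: the second block produces $p-q=1$, admissible exactly because $k-2l\ge 1$; this, rather than any intricate computation, is the crux of the argument, the remainder being a routine sweep through the two blocks. (The hypothesis $0<2l$ enters only to ensure the second block is nonempty and that $\xi$ sits in the interior of the chamber, legitimizing the reduction to positive roots.)
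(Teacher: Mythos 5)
Your proof is correct and follows essentially the same route as the paper's: the paper also reduces $\xi\preceq\xi_r$ via (\ref{gs1}) to a direct evaluation of the six positive roots (\ref{roots}) at each $\xi_i$, using $\eta_1=H_2$, $\eta_2=H_1-H_2$. You simply carry out the root-value comparison explicitly (and usefully isolate where $2l<k$ is needed), where the paper leaves the computation to the reader.
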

\begin{proof} Consider the elements $H_1,H_2\in I$ defined by (\ref{Hs}). We have:  $\eta_1=H_2$ and $\eta_2=H_1-H_2$.  Taking account (\ref{gis}) and (\ref{gs1}),  by
direct evaluation of the positive roots (\ref{roots}) at each $\xi_{i}$, one can conclude that $\mathfrak{p}^\xi\subset \mathfrak{p}^{\xi'}$, i.e. $\xi\preceq\xi'$.
\end{proof}
Hence, the sequence (\ref{2l<k}) defines a factorization of $\gamma$ of length $k$ and type $(1,1,\ldots,1)$.
Set $W=\gamma H_+$ and $W_\xi=\gamma_\xi H_+$. Take $\Psi\in\Lambda^+_{\mathrm{alg}}G^\C$ such that $W=\Psi\gamma_\xi H_+=\Psi W_\xi$. Next we describe the canonical factorization of $\gamma$
in terms of the Grassmanian model for loops groups.

First observe that
$$\gamma_\xi=\gamma_{\eta_1}^{k-l}\gamma_{\eta_2}^l=(\lambda^{-1}\pi_{D}+\pi_{(D\ominus \overline{D})^\perp}+\lambda\, \pi_{{{\overline{D}}}})^{k-l}(\lambda^{-1}\pi_{B}+\pi_{(B\ominus \overline{B})^\perp}+\lambda\, \pi_{{{\overline{B}}}})^{l}$$ and
\begin{multline} W_\xi=\lambda^{-k}A+\ldots+\lambda^{-k+l-1}A+\lambda^{-k+l}D+\ldots+\lambda^{-l-1}D+\lambda^{-l}A^a+\ldots+\lambda^{-1}A^a\\ +\overline{A^a}^\perp+
\ldots+\lambda^{l-1}\overline{A^a}^\perp+\lambda^{l}\overline{D}^\perp+\ldots+\lambda^{k-l-1}\overline{D}^\perp+\lambda^{k-l}\overline{A}^\perp+\ldots+\lambda^{k-1}\overline{A}^\perp+
\lambda^kH_+, \label{wxii} \end{multline}
where $A=\overline{L}_1$, $B=\overline{L}_1\oplus \overline{L}_2$, $D=\overline{L}_1\oplus {L}_3$ and $A^a$ is the annihilator of $A$, that is, $A^a=\overline{L}_1\oplus \overline{L}_2\oplus {L}_3.$

Consider now the sequence (\ref{2l<k}). We have $\gamma_{\xi_{k-1}}=\gamma_{\eta_1}^{k-l-1}\gamma_{\eta_2}^l$, and it is easy to check that
\begin{equation}\label{wxi}
W_{\xi_{k-1}}=\lambda(W_\xi\cap \lambda^{-k}H_+)+(W_\xi\cap\lambda^{-l}H_+)+\lambda^{-1}(W_\xi \cap \lambda^{l+1}H_+).
\end{equation}
Set $\gamma_{i}=\mathcal{U}_{\xi,\xi_{i}}(\gamma)$ and $W^{i}=\gamma_{i}H_+$. Taking account definition (\ref{uxi}), we deduce from (\ref{wxi}) that:
$$W^{k-1}=\Psi W_{\xi_{k-1}}=\lambda(W\cap \lambda^{-k}H_+)+(W\cap\lambda^{-l}H_+)+\lambda^{-1}(W\cap \lambda^{l+1}H_+).$$
Each element of the sequence of subspaces
$$W=W^k,\ldots,W^{k-i},\ldots, W^{2l},W^{2l-1},\ldots, W^{2(l-j)},W^{2(l-j)-1},\ldots,W^2,W^1,W^0=H_+$$
can be obtained out of $W$ by iterating this procedure. Explicitly:
\begin{align}
W^{k-i}&=\lambda^i\big(W\cap \lambda^{-k}H_+\big)+\big(W\cap\lambda^{-l}H_+ \big)+\lambda^{-i}\big(W\cap\lambda^{l+i}H_+\big);\label{1}\\
W^{2(l-j)}&=\lambda^{k-2l+j}\big(W\cap \lambda^{-k+j}H_+\big)+\lambda^j\big(W\cap \lambda^{-l}H_+\big)+\lambda^{-j}\big(W\cap \lambda^jH_+\big)\nonumber\\ &\quad\quad\quad\quad\quad\quad+\lambda^{-k+2l-j}\big(W\cap \lambda^{k-l}H_+\big)+\lambda^{2(l-j)}H_+;  \label{2}\\
W^{2(l-j)-1}&=\lambda^{k-2l+j}\big(W\cap \lambda^{-k+j+1}H_+\big)+ \lambda^{j+1}(W\cap \lambda^{-l}H_+)+\lambda^{-j-1}\big(W\cap\lambda^{j+1}H_+\big)\nonumber\\& \quad\quad\quad\quad\quad\quad+\lambda^{-k+2l-j}\big(W\cap \lambda^{k-l-1} H_+\big)+\lambda^{2(l-j)-1}H_+;\label{3}
\end{align}
where $0 \leq i\leq k-2l$ and $0\leq j\leq l-1$.

\subsubsection{Case  $l=\frac{k}{2}$} Define ${\eta}=\xi/l\in I$. In this case, $\kappa({\eta})=2$ and the sequence of elements
\begin{equation}\label{lk}
\xi=\xi_l,\ldots,\xi_i=(l-i){\eta},\ldots,\xi_1={\eta},\xi_0=0
\end{equation}
is such that $\xi\preceq\xi_{l-i}$ for each $i$. Hence (\ref{lk}) defines
a factorization of $\gamma$ of length $l=\frac{k}{2}$ and type $(2,2,\ldots,2)$.
The sequence of subspaces $W=W^l,\ldots,W^{l-i},\ldots,W^1$ corresponding to (\ref{lk}) is given by
\begin{equation}\label{5}
W^{l-i}=\lambda^i\big(W\cap \lambda^{-2l+i}H_+\big)+\lambda^{-i}\big(W\cap \lambda^i H_+\big)+\lambda^{2(l-i)}H_+.
\end{equation}
\subsubsection{Case $l=0$} Define $\eta=\xi/k$. Clearly $\kappa(\eta)=1$ and the sequence of elements
\begin{equation}\label{l=0}
\xi=\xi_k,\ldots,\xi_{k-i}=(k-i)\eta,\ldots,\xi_1=\eta,\xi_0=0
\end{equation}
is such that $\xi\preceq\xi_{k-i}$ for each $i$. Hence (\ref{l=0})
defines a factorization of $\gamma$ of length $k$ and type $(1,1,\ldots,1)$.
The corresponding sequence of subspaces $W=W^k,\ldots,W^{k-i},\ldots,W^1,W^0=H_+$ is given by
\begin{equation}\label{4}
W^{k-i}=\lambda^i\big(W\cap \lambda^{-k}H_+\big)+\big(W\cap\lambda^{-l}H_+ \big)+\lambda^{-i}\big(W\cap\lambda^{l+i}H_+\big).
\end{equation}

\section{Harmonic maps into a Lie group}

Let $M$ be a Riemann surface and  $\varphi:M\rightarrow G$  a map into
a compact matrix Lie group. Equip $G$ with a bi-invariant metric.
Define $\alpha=\varphi^{-1}{d}\varphi$ and let $\alpha=\alpha'+\alpha''$
be the type decomposition of $\alpha$ into $(1,0)$ and
$(0,1)$-forms. It is well known \cite{U}
 that $\varphi:M\rightarrow G $ is harmonic if and only if the loop of
$1$-forms given by
\begin{equation}
\label{flcon}
 \alpha_\lambda=\frac{1-\lambda^{-1}}{2}
\alpha'+\frac{1-\lambda}{2} \alpha''
 \end{equation}
 satisfies the Maurer-Cartan equation ${d}\alpha_\lambda + \frac{1}{2}[\alpha_\lambda\wedge \alpha_\lambda]=0$
 for each $\lambda\in S^1$.
Then, if $M$ is simply connected and $\varphi$ is harmonic, we can
integrate to obtain a map $\Phi:M\rightarrow \Omega G$ such that
$\alpha_\lambda=\Phi_\lambda^{-1}{d}\Phi_\lambda$ and $\Phi_{-1}=\varphi$. We call $\Phi$
an \textit{extended solution} associated to $\varphi$.

The harmonic map $\varphi:M\to G$ is of \emph{finite uniton number} if it admits an extended solution $\Phi:M\to \Omega_{\mathrm{alg}}G$, that is,
$\Phi=\sum_{i=s}^r\zeta_i\lambda^i$ for some $r\leq s\in\mathbb{Z}$. The minimal value of $r-s$, $r(\varphi)$, is called the \emph{uniton number} of $\varphi$. The reader should be
alert to the fact that $r(\varphi)$ does not coincide with the \emph{minimal uniton number} of $\varphi$ which is estimated in \cite{BG} for a general compact semi-simple Lie group.

\begin{thm}\cite{BG}\label{usd}
\emph{Let $\Phi:M\to \Omega_{\mathrm{alg}}G$ be an extended solution. Then there exists some $\xi\in I$, and some discrete subset $D$ of $M$, such that $\Phi(M\setminus D)\subseteq U_\xi$.}
  \end{thm}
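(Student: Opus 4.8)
The plan is to exploit the holomorphicity of the extended solution together with the Bruhat decomposition $\Omega_{\mathrm{alg}}G=\bigcup_{\xi\in I'}U_\xi$ and an upper-semicontinuity argument. First I would pass to the Grassmannian model and set $W=\Phi H_+\colon M\to Gr_{\mathrm{alg}}(G)$. The extended-solution equations (equivalently, the flatness of $\alpha_\lambda$ in (\ref{flcon})) translate into the conditions $\bar\partial W\subseteq W$ and $\partial W\subseteq\lambda^{-1}W$; the former makes $W$ a \emph{holomorphic} map into $Gr_{\mathrm{alg}}(G)$, and this is the feature I will use repeatedly. Because $\Phi$ has finite uniton number I may normalize the Fourier support to $[0,r]$, so that $\lambda^rH_+\subseteq W(x)\subseteq H_+$ uniformly in $x\in M$ (using that for $G$ compact $\Phi(x)^{-1}=\Phi(x)^*$ reverses the support). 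Thus $W$ actually takes values in the finite-dimensional Grassmannian of subspaces trapped between $\lambda^rH_+$ and $H_+$, and only finitely many $\xi\in I'$, namely those with $\kappa(\xi)\le r$, can occur.

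Second, I would record how the stratum label is read off from numerical data. For each $i$ with $0\le i\le r$ put $d_i(x)=\dim\big(W(x)\cap\lambda^iH_+\big)$. By the Bruhat decomposition every $W(x)$ lies in a unique $U_{\xi(x)}$, and via the description (\ref{popo1}) of $u_\xi$ together with the relation $A_i=p_i(W\cap\lambda^iH_+)$ the dimensions of the flag $\{A_i\}$—hence the eigenvalue pattern of $\xi(x)$ through (\ref{gis})—are completely determined by the integers $d_i(x)$. Consequently $\xi(x)\in I'$ is a function of the tuple $(d_0(x),\ldots,d_r(x))$ alone, and membership of $W(x)$ in a fixed $U_\xi$ is equivalent to these dimensions taking the prescribed values.

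Third, I would run the semicontinuity argument. For a holomorphically varying family of subspaces the dimension of intersection with a fixed subspace is upper semicontinuous, so each $x\mapsto d_i(x)$ jumps up only on a proper complex-analytic subset of $M$; since $M$ is a Riemann surface such a subset is discrete. Let $D$ be the union of these finitely many discrete loci. On $M\setminus D$ every $d_i$ attains its generic (minimal) value, so the tuple $(d_0,\ldots,d_r)$ is constant there; by the previous paragraph $\xi(x)$ is then constant on $M\setminus D$, yielding a single $\xi\in I'$ with $\Phi(M\setminus D)=W(M\setminus D)\subseteq U_\xi$. Connectedness of $M$ guarantees that the generic value is globally unambiguous.

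The main obstacle I anticipate is the precise matching in the second step: verifying that the locally closed Bruhat stratum $U_\xi$ is cut out exactly by the determinantal conditions $d_i=\mathrm{const}$, so that the discrete invariant $\xi$ is genuinely a function of $(d_i)$ and is locally constant along the image. Establishing the holomorphicity $\bar\partial W\subseteq W$ from the extended-solution equation, and the uniform window from finite uniton number, are the supporting technical points, but I expect these to be routine compared with the stratification bookkeeping.
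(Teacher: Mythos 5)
The paper does not actually prove this statement -- it is quoted from Burstall and Guest \cite{BG} -- so your proposal has to be judged on its own terms. Your overall strategy (holomorphicity of $W=\Phi H_+$, a uniform window $\lambda^rH_+\subseteq W\subseteq\lambda^{-r}H_+$ leaving only finitely many admissible strata, upper semicontinuity of intersection dimensions, discreteness of the jump loci on a Riemann surface) is sound. But your second step has a genuine gap, and it is precisely the point you flag as the anticipated obstacle: for a general compact group $G$ the Bruhat stratum containing $W(x)$ is \emph{not} a function of the tuple of dimensions $d_i(x)$. Those dimensions only recover the multiset of eigenvalues of $\xi$ on $\C^n$, i.e.\ the conjugacy class of $\xi$ inside $U(n)$, whereas the strata of $Gr_{\mathrm{alg}}(G)$ are indexed by $I$ modulo the Weyl group of $G$, and distinct $G$-classes can fuse in $U(n)$. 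Concretely, for $G=SO(4)\subset U(4)$ the cocharacters $\xi=(2,1)$ and $\xi'=(2,-1)$ both lie in the closed fundamental chamber and are not Weyl-conjugate, yet $\gamma_\xi$ and $\gamma_{\xi'}$ have the same eigenvalue multiset $\{\lambda^{\pm2},\lambda^{\pm1}\}$ on $\C^4$; so $\gamma_\xi H_+$ and $\gamma_{\xi'}H_+$ have identical dimension data while lying in different strata. Hence constancy of the $d_i$ on $M\setminus D$ does not by itself confine $\Phi(M\setminus D)$ to a single $U_\xi$.

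The gap is repairable, but it needs an input you do not supply: one must show that the finitely many $G$-strata sharing a given dimension tuple are mutually non-adherent (for instance because they lie on the same critical level of the energy and closure relations between distinct strata strictly change the energy), so that each is open in their union and connectedness of $M\setminus D$ selects a single $\xi$. The argument in \cite{BG} avoids the issue altogether: the closures $\overline{U_\xi}$ of the finitely many relevant strata are complex-analytic subvarieties, so each $\Phi^{-1}(\overline{U_\xi})$ is either discrete or all of $M$, and one picks out the generic stratum directly, without reducing to dimension counts. Two smaller points: $\dim\big(W(x)\cap\lambda^iH_+\big)$ is infinite as written and must be taken modulo $\lambda^rH_+$ (equivalently, use $\dim A_i$); and normalizing the Fourier support to $[0,r]$ by multiplying by a power of $\lambda$ takes you out of $Gr(G)$ for semisimple $G$, so you should keep the symmetric window. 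For the group actually used in this paper, $G_2$ acting on $\C^7$, the eigenvalue multiset $\{0,\pm k,\pm l,\pm(k-l)\}$ with $2l\le k$ does determine $\xi$, so your argument happens to close there -- but the theorem is stated, and applied, for general $G$.
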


Now, start with a smooth map $\Phi:M\setminus D\to U_\xi$ and consider $\Psi:M\setminus D \to \Lambda_{\mathrm{alg}}^+G^\C$ such that $\Phi H_+=\Psi\gamma_\xi H_+$. Clearly,
 $\Psi\gamma_\xi=\Phi b$ for some $b:M\setminus D\to \Lambda^+_{\mathrm{alg}}G^\C.$
Write
\begin{equation}\label{not}
\Psi^{-1}\Psi_z=\sum_{i\geq 0} X'_i\lambda^i,\,\,\,\,\Psi^{-1}\Psi_{\bar{z}}=\sum_{i\geq 0} X''_i\lambda^i.\end{equation}
Proposition 4.4 in  \cite{BG} establishes that   $\Phi$ is an extended solution if, and only if,
\begin{equation}\label{im}
\mathrm{Im} X'_i\subset \,\mathfrak{p}^\xi_{i+1},\,\,\,\,\mathrm{Im} X''_i\subset \mathfrak{p}^\xi_{i},
\end{equation}
where $\mathfrak{p}_i^\xi=\bigoplus_{j\leq i}\g_j^\xi.$ Taking account (\ref{h10}) and (\ref{t10}), this means that:
\begin{thm}\emph{A smooth map $\Phi:M\setminus D\to U_\xi$ is an extended solution if, and only if $\Phi$ is holomorphic and super-horizontal
  (that is, the derivative of $\Phi$ along $(1,0)$-direction takes values in $H^{(1,0)}U_\xi$).}
\end{thm}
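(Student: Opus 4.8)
The plan is to read the two analytic conditions in (\ref{im}) as the holomorphicity and the super-horizontality of $\Phi$, by combining the bundle descriptions (\ref{t10}) and (\ref{h10}) with the Maurer--Cartan form of the chosen lift $\Psi$. Since $\Phi$ is an extended solution if and only if (\ref{im}) holds (Proposition 4.4 of \cite{BG}), it suffices to match each of the two relations there with one of the two geometric requirements; the first relation will correspond to super-horizontality and the second to holomorphicity.

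First I would set up the dictionary between the derivative of $\Phi$ and the form $\Psi^{-1}d\Psi=(\Psi^{-1}\Psi_z)\,dz+(\Psi^{-1}\Psi_{\bar z})\,d\bar z$. Writing $\pi:\Lambda^+_{\mathrm{alg}}G^\C\to U_\xi$ for the natural projection and $\mathfrak{h}=\Lambda^+_{\mathrm{alg}}\g^\C\cap\gamma_\xi\Lambda^+\g^\C\gamma_\xi^{-1}$ for the isotropy algebra, I would use that $U_\xi$ is a complex homogeneous space and $\pi$ a holomorphic submersion, so that $d\pi$ preserves the type decomposition and, after left translation, induces the quotient map $\Lambda^+_{\mathrm{alg}}\g^\C\to\Lambda^+_{\mathrm{alg}}\g^\C/\mathfrak{h}\cong T^{1,0}U_\xi$ of (\ref{t10}). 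Computing $d\Phi=d\pi\circ d\Psi$ along $\partial_z$ and $\partial_{\bar z}$ and extracting the $T^{1,0}$-parts, one obtains that the $(1,0)$-derivative $\partial\Phi$ is represented by $[\Psi^{-1}\Psi_z]$ and that the $T^{1,0}$-component of $d\Phi(\partial_{\bar z})$, namely $\bar\partial\Phi$, is represented by $[\Psi^{-1}\Psi_{\bar z}]$, both living in the fibre $\Lambda^+_{\mathrm{alg}}\g^\C/\mathfrak{h}$. This is the step I expect to be the main obstacle, since two complex structures are in play (that of the Riemann surface and that of the target): the point to verify carefully is that holomorphicity of $\Phi$ is governed by the $\bar z$-derivative, whereas super-horizontality is governed by the $z$-derivative.

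Granting this dictionary, what remains is an identification of subspaces. By (\ref{lal}) one has $\mathfrak{h}=\bigoplus_{i\geq0}\lambda^i\mathfrak{p}^\xi_i$, so $\Phi$ is holomorphic, i.e. $[\Psi^{-1}\Psi_{\bar z}]=0$, exactly when $\Psi^{-1}\Psi_{\bar z}=\sum_{i\geq0}X''_i\lambda^i$ lies in $\mathfrak{h}$, that is, when $X''_i\in\mathfrak{p}^\xi_i$ for every $i\geq0$; this is precisely the second relation in (\ref{im}). Similarly, the computation carried out in the proof of the super-horizontality lemma gives $\Lambda^+_{\mathrm{alg}}\g^\C\cap\lambda^{-1}\gamma_\xi\Lambda^+\g^\C\gamma_\xi^{-1}=\bigoplus_{i\geq0}\lambda^i\mathfrak{p}^\xi_{i+1}$, so by (\ref{h10}) the class $[\Psi^{-1}\Psi_z]$ belongs to $H^{1,0}U_\xi$ exactly when $\Psi^{-1}\Psi_z=\sum_{i\geq0}X'_i\lambda^i$ lies in $\bigoplus_{i\geq0}\lambda^i\mathfrak{p}^\xi_{i+1}$, that is, when $X'_i\in\mathfrak{p}^\xi_{i+1}$ for every $i\geq0$; this is precisely the first relation in (\ref{im}).

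Finally I would assemble the equivalences: $\Phi$ is super-horizontal if and only if the first relation in (\ref{im}) holds, and $\Phi$ is holomorphic if and only if the second relation holds, whence $\Phi$ is simultaneously holomorphic and super-horizontal if and only if both relations in (\ref{im}) hold, which by Proposition 4.4 of \cite{BG} is equivalent to $\Phi$ being an extended solution. I would remark that the two relations decouple, one constraining only $\Psi^{-1}\Psi_z$ and the other only $\Psi^{-1}\Psi_{\bar z}$; in particular super-horizontality is a condition on $\partial\Phi$ imposed independently of holomorphicity, which is exactly what permits the clean characterization as \emph{holomorphic and super-horizontal}.
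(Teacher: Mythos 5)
Your proposal is correct and follows exactly the route the paper intends: the paper derives this theorem directly from Proposition 4.4 of \cite{BG} by reading the two conditions in (\ref{im}) through the bundle descriptions (\ref{t10}) and (\ref{h10}), identifying $X''_i\in\mathfrak{p}^\xi_i$ with holomorphicity (vanishing of $[\Psi^{-1}\Psi_{\bar z}]$ modulo the isotropy algebra $\bigoplus_{i\geq0}\lambda^i\mathfrak{p}^\xi_i$ from (\ref{lal})) and $X'_i\in\mathfrak{p}^\xi_{i+1}$ with super-horizontality. You have merely made explicit the dictionary the paper leaves implicit, so no further comment is needed.
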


Since  each fiber bundle morphisms $\mathcal{U}_{\xi,\xi'}:U_\xi\to U_{\xi'}$ is holomorphic and super-horizontal, we have the following  generalization of Theorem 4.11 in \cite{BG}:
\begin{prop}\label{popo}
\emph{Given an extended solution $\Phi:M\setminus D\to U_\xi$ and an element $\xi'\in I$ such that $\xi\preceq \xi'$, then
$\mathcal{U}_{\xi,\xi'}(\Phi)=\mathcal{U}_{\xi,\xi'}\circ \Phi:M\setminus D\to U_{\xi'}$ is a new extended solution.}
 \end{prop}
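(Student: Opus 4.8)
The plan is to reduce the claim to the two characterizations already established in the excerpt. By the theorem immediately preceding this proposition, a smooth map $\Phi:M\setminus D\to U_\xi$ is an extended solution if and only if $\Phi$ is holomorphic and super-horizontal, meaning $D\Phi$ sends $(1,0)$-directions into the subbundle $H^{1,0}U_\xi$. Since $\mathcal{U}_{\xi,\xi'}\circ\Phi$ again takes values in a single unstable manifold $U_{\xi'}$, it suffices to verify that this composite is itself holomorphic and super-horizontal, and then invoke the same theorem in the reverse direction to conclude that it is an extended solution.

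First I would handle holomorphicity. The map $\mathcal{U}_{\xi,\xi'}$ was shown to be holomorphic, because the holomorphic structures on $U_\xi$ and $U_{\xi'}$ are both induced from that of $\Lambda^+_{\mathrm{alg}}G^\C$ and $\mathcal{U}_{\xi,\xi'}$ is the descent of the identity at the group level, $\Psi\gamma_\xi H_+\mapsto \Psi\gamma_{\xi'}H_+$. Since $\Phi$ is holomorphic by hypothesis, the composite $\mathcal{U}_{\xi,\xi'}\circ\Phi$ is a composition of a holomorphic map into $U_\xi$ with a holomorphic fibre-bundle morphism, hence holomorphic.

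Next I would check super-horizontality of the composite. The key input is the lemma in the excerpt asserting that $\mathcal{U}_{\xi,\xi'}$ is super-horizontal whenever $\xi\preceq\xi'$, i.e. $D\mathcal{U}_{\xi,\xi'}(H^{1,0}U_\xi)\subset H^{1,0}U_{\xi'}$. Writing $\partial$ for the $(1,0)$-part of the differential, super-horizontality of $\Phi$ gives $\partial\Phi\in H^{1,0}U_\xi$, and applying $D\mathcal{U}_{\xi,\xi'}$ together with the chain rule yields
\begin{equation*}
\partial(\mathcal{U}_{\xi,\xi'}\circ\Phi)=D\mathcal{U}_{\xi,\xi'}\circ\partial\Phi\in D\mathcal{U}_{\xi,\xi'}(H^{1,0}U_\xi)\subset H^{1,0}U_{\xi'},
\end{equation*}
which is exactly super-horizontality of the composite. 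Having verified both conditions, the theorem characterizing extended solutions as precisely the holomorphic super-horizontal maps into $U_{\xi'}$ shows that $\mathcal{U}_{\xi,\xi'}(\Phi)$ is an extended solution.

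I do not expect a serious obstacle here, since all the structural work has already been done in the preceding lemmas; the proposition is essentially a formal consequence of combining the characterization theorem with the holomorphicity and super-horizontality of $\mathcal{U}_{\xi,\xi'}$. The only point requiring a little care is to confirm that the hypothesis $\xi\preceq\xi'$ is genuinely needed and used, namely that it is precisely this condition which guarantees both that $\mathcal{U}_{\xi,\xi'}$ is well defined (via the inclusion of isotropy subgroups from the earlier lemma) and that it is super-horizontal; without it neither the domain nor the horizontal-subbundle containment would be available.
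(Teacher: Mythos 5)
Your argument is exactly the paper's: the proposition is stated there as an immediate consequence of the characterization of extended solutions as holomorphic super-horizontal maps into $U_\xi$ together with the preceding lemma that $\mathcal{U}_{\xi,\xi'}$ is a holomorphic, super-horizontal fibre-bundle morphism when $\xi\preceq\xi'$. Your write-up simply makes the chain-rule step explicit, so it is correct and follows the same route.
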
On the other hand, since the bundle map $u_\xi:U_\xi\to \Omega_\xi$  is holomorphic and super-horizontal, we see that:
\begin{prop}\cite{BG}
\emph{If  $\Phi:M\setminus D\to U_\xi$ is an extended solution, then  $u_\xi\circ\Phi:M\setminus D\to \Omega_\xi$ is an extended solution.}
\end{prop}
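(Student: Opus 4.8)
The plan is to read off the claim from the characterization of extended solutions just established: a smooth map into $U_\xi$ is an extended solution precisely when it is holomorphic and super-horizontal. Since $\Omega_\xi$ is a critical manifold of $E$, it embeds in the total space $U_\xi$ as the zero section of the vector bundle $u_\xi:U_\xi\to\Omega_\xi$, and on this zero section $u_\xi$ restricts to the identity. I would therefore regard $u_\xi\circ\Phi:M\setminus D\to\Omega_\xi$ as a map into $U_\xi$ and verify the two hypotheses of the characterization theorem for it; the proposition then follows formally.

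First I would dispose of holomorphicity. The projection $u_\xi$ is the natural map $\Lambda^+_{\mathrm{alg}}G^\C\big/\big(\Lambda^+_{\mathrm{alg}}G^\C\cap\gamma_\xi\Lambda^+G^\C\gamma_\xi^{-1}\big)\to G^\C/P_\xi$, $[\gamma]\mapsto[\gamma(0)]$, which is holomorphic for the holomorphic structures induced from $\Lambda^+_{\mathrm{alg}}G^\C$. As $\Phi$ is an extended solution it is holomorphic, so $u_\xi\circ\Phi$ is holomorphic.

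The substance is super-horizontality. On $(1,0)$-vectors $D(u_\xi\circ\Phi)=Du_\xi\circ D\Phi$, and super-horizontality of $\Phi$ gives $D\Phi(T^{1,0}M)\subset H^{1,0}U_\xi$; it thus suffices to show $Du_\xi\big(H^{1,0}U_\xi\big)\subset H^{1,0}\Omega_\xi\subset H^{1,0}U_\xi$. Using $\Lambda^+_{\mathrm{alg}}\g^\C\cap\lambda^{-1}\gamma_\xi\Lambda^+\g^\C\gamma_\xi^{-1}=\bigoplus_{i\geq0}\lambda^i\p^\xi_{i+1}$ from the super-horizontality lemma, a super-horizontal $(1,0)$-vector is represented by a class $\big[\sum_{i\geq0}\lambda^i\eta_i\big]$ with $\eta_i\in\p^\xi_{i+1}$. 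By the lemma computing $Du_\xi$, this class is sent to $[\eta_0]$, and since $\eta_0\in\p^\xi_1=\bigoplus_{j\leq1}\g^\xi_j$, modulo $\p_\xi=\bigoplus_{j\leq0}\g^\xi_j$ it lands in the summand $\g^\xi_1$. This $\g^\xi_1$ is precisely the lowest graded piece of $T^{1,0}\Omega_\xi\cong\g^\C/\p_\xi$, that is, the super-horizontal distribution $H^{1,0}\Omega_\xi$ of the flag manifold $\Omega_\xi\cong G^\C/P_\xi$, sitting inside the $\lambda^0$-part of $H^{1,0}U_\xi$ along the zero section. Hence $u_\xi\circ\Phi$ is super-horizontal.

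Combining the two, the characterization theorem shows $u_\xi\circ\Phi$ is an extended solution. I expect the only delicate point to be the grading bookkeeping of the third paragraph: correctly identifying $H^{1,0}\Omega_\xi$ with $\g^\xi_1$ and confirming that the image of $Du_\xi$ restricted to $H^{1,0}U_\xi$ lands in this subspace rather than merely in $T^{1,0}\Omega_\xi$. Once the grading is tracked, it is immediate from the explicit formula for $Du_\xi$, so the result is in essence a restatement of the super-horizontality of $u_\xi$ noted just before that lemma.
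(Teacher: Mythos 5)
Your proof is correct and follows essentially the same route as the paper, which derives the proposition from the observation that $u_\xi$ is holomorphic and super-horizontal (via the explicit formula for $Du_\xi$, namely $[\lambda^j\eta]\mapsto 0$ for $j>0$ and $[\eta]$ for $j=0$) combined with the characterization of extended solutions into $U_\xi$ as holomorphic super-horizontal maps. Your third paragraph simply makes explicit the grading bookkeeping ($\eta_0\in\p^\xi_1$, hence $[\eta_0]\in\g^\xi_1$ modulo $\p_\xi$, which is the $\lambda^0$-part of $H^{1,0}U_\xi$) that the paper leaves implicit when it asserts that ``$u_\xi$ is super-horizontal as well.''
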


The following lemma will be used later:
\begin{lem}\label{poi}
 \emph{ Let $\varphi:M\to G$ be the harmonic map $\varphi=\Phi_{-1}$. Then
  $$\varphi^{-1}\varphi_z=-2\sum_{i\geq 0}b(0){X'_i}^{i+1}b(0)^{-1},$$
  where ${X'_i}^{i+1}$ is the component of ${X'_i}$ over $\g^\xi_{i+1}$, with respect to the decomposition $\g^\C=\bigoplus \g^\xi_j$.}
\end{lem}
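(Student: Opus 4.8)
The plan is to reduce the claim to extracting the $\lambda^{-1}$-coefficient of the logarithmic derivative $\Phi^{-1}\Phi_z$, which the flatness equation (\ref{flcon}) determines explicitly. Since $\Phi$ is an extended solution with $\Phi_{-1}=\varphi$, writing $\alpha'=(\varphi^{-1}\varphi_z)\,dz$ and reading off the $(1,0)$-part of $\alpha_\lambda=\Phi_\lambda^{-1}d\Phi_\lambda$ gives
\[
\Phi^{-1}\Phi_z=\frac{1-\lambda^{-1}}{2}\,\varphi^{-1}\varphi_z,
\]
so that $\varphi^{-1}\varphi_z$ is recovered as $(-2)$ times the coefficient of $\lambda^{-1}$ in $\Phi^{-1}\Phi_z$. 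It therefore suffices to compute this coefficient from the factorization of $\Phi$.

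First I would substitute $\Phi=\Psi\gamma_\xi b^{-1}$ (which follows from $\Psi\gamma_\xi=\Phi b$) and use that $\gamma_\xi$ is independent of $z$ to obtain
\[
\Phi^{-1}\Phi_z=b\,\gamma_\xi^{-1}(\Psi^{-1}\Psi_z)\gamma_\xi\,b^{-1}-b_z b^{-1}.
\]
Now Lemma \ref{adj} gives $\gamma_\xi^{-1}X\gamma_\xi=\lambda^{-j}X$ for $X\in\g^\xi_j$; decomposing $X'_i=\sum_j {X'_i}^{j}$ along $\g^\C=\bigoplus_j\g^\xi_j$ and using (\ref{not}) yields
\[
\gamma_\xi^{-1}(\Psi^{-1}\Psi_z)\gamma_\xi=\sum_{i\geq 0}\sum_j \lambda^{i-j}\,{X'_i}^{j}.
\]
Two observations are then essential: (i) because $b\in\Lambda^+_{\mathrm{alg}}G^\C$, both $b$ and $b^{-1}$, and hence $b_z b^{-1}$, involve only non-negative powers of $\lambda$, so $b_z b^{-1}$ contributes nothing to the $\lambda^{-1}$-coefficient; and (ii) the extended-solution condition (\ref{im}), $\mathrm{Im}\,X'_i\subset\mathfrak{p}^\xi_{i+1}$, forces ${X'_i}^{j}=0$ for $j>i+1$, so that in the double sum above every exponent satisfies $i-j\geq -1$.

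Combining these, the lowest power occurring in $\gamma_\xi^{-1}(\Psi^{-1}\Psi_z)\gamma_\xi$ is $\lambda^{-1}$, with coefficient $\sum_{i\geq 0}{X'_i}^{i+1}$ (coming from the terms $j=i+1$). Conjugating by $b$ and $b^{-1}$, each of which has constant term $b(0)$, respectively $b(0)^{-1}$, at $\lambda^0$ and only higher powers thereafter, the only way to produce $\lambda^{-1}$ is to pair this lowest term with the $\lambda^0$-parts of $b$ and $b^{-1}$; every positive-order correction raises the total degree. Hence the $\lambda^{-1}$-coefficient of $\Phi^{-1}\Phi_z$ equals $b(0)\big(\sum_{i\geq 0}{X'_i}^{i+1}\big)b(0)^{-1}$, and multiplication by $-2$ yields the asserted formula. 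The main obstacle is precisely the bookkeeping in this last step: one must argue cleanly that the $\lambda$-dependent conjugation by $b$ collapses to conjugation by the single endomorphism $b(0)$ at the relevant order. This hinges entirely on the two degree constraints—$b,b^{-1}$ holomorphic at $\lambda=0$ and the image condition bounding the negative powers—so that no higher-order term of $b$ and no cancellation can feed into the coefficient of $\lambda^{-1}$.
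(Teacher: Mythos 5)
Your proposal is correct and follows essentially the same route as the paper's own proof: both substitute $\Phi=\Psi\gamma_\xi b^{-1}$, compute $\Phi^{-1}\Phi_z=b\gamma_\xi^{-1}\Psi^{-1}\Psi_z\gamma_\xi b^{-1}-b_zb^{-1}$, discard $b_zb^{-1}$ as lying in $\Lambda^+\g^\C$, and use Lemma \ref{adj} together with (\ref{not}) and (\ref{im}) to identify the $\lambda^{-1}$-coefficient as $\sum_{i\geq 0}b(0){X'_i}^{i+1}b(0)^{-1}$ before invoking (\ref{flcon}). You merely spell out the degree bookkeeping that the paper leaves implicit.
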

\begin{proof}
  Since $\Phi=\Psi\gamma_\xi b^{-1}$,
  $$\Phi^{-1}\Phi_z=b\gamma_\xi^{-1}\Psi^{-1}\Psi_z\gamma_\xi b^{-1}-b_zb^{-1}.$$
  It is clear that $b_zb^{-1}$ takes values in $\Lambda^+\g^\C$.
 Hence, taking account (\ref{not}), (\ref{im}) and Lemma \ref{adj},  the $\lambda^{-1}$-coefficient of  $\Phi^{-1}\Phi_z$ is given by:
 $$\sum_{i\geq 0}b(0){X'_i}^{i+1}b(0)^{-1}.$$ The lemma follows now from (\ref{flcon}).
\end{proof}

\subsection{Harmonic maps from the Grassmannian point of view}

 Let $W:{M} \rightarrow Gr(G)$ correspond to a smooth map $\Phi:M\to \Omega G$ under the
identification $\Omega G \cong Gr(G)$, that is $W=\Phi
H_+$.
Segal \cite{S} has observed that $\Phi$ is an extended solution if, and only if, $W$ is a
solution of equations:
\begin{align*}
W_z  \subset  {\lambda}^{-1}W,\quad   W_{\bar{z}}
  \subset W.
\end{align*}
The first condition means that $\frac{\partial s}{\partial z}(z)$
is contained in the subspace $\lambda^{-1}W(z)$ of $H$, for every
(smooth) map  $s : M\rightarrow H$ such that $s(z)\in
W(z)$, and it is equivalent to the super-horizontality of $\Phi$. The second condition is interpreted in a similar way and it is equivalent to the holomorphicity of $\Phi$.

\begin{rem}
  Consider some discrete set $D\subset M$, an element $\xi\in I$ and an extended solution  $\Phi:M\setminus D\to U_\xi$. As explained in Remark 3 of \cite{CP}, the bundle
  $W=\Phi H_+$ can be extended holomorphically to $M$, and, consequently,
$\Phi$ defines a global extended solution from  $M$ to $\Omega_{\mathrm{alg}}G$.
\end{rem}

If $\Phi:M\setminus D\to U_\xi$ is an extended solution and $W=\Phi H_+$, then $u_\xi(W)=u_\xi\circ\Phi H_+$ is given pointwise by
(\ref{popo1}) and we get holomorphic subbundles $A_i$ of the trivial bundle $\underline{\C}^n=M\times \C^n$ such that
$0\subsetneq A_{-s} \subseteq \ldots \subseteq A_{r-1} \subsetneq A_r=\underline{\C}^n.$
 The super-horizontally condition  implies that ${A_i}_z\subset A_{i+1}$.

%
%
%


\subsection{Normalization of harmonic maps}

The next two propositions will be used in Section \ref{pupu} to  estimate the uniton number of harmonic maps $M\to G_2$. The first one is a  generalization of Theorem 4.5 in \cite{BG}:

\begin{prop}\label{nor}
\emph{ Let $\Phi:M\setminus D\to U_\xi$ be an extended solution. Take $\xi'\in I$ such that $\xi\preceq {\xi'}$ and $\g_0^\xi=\g_0^{\xi'}$. Then there exists some constant loop $\gamma\in \Omega_{\mathrm{alg}}G$ such that $\gamma\Phi:M\setminus D\to U_{\xi'}$.}
\end{prop}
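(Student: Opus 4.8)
The plan is to exploit the fibre-bundle morphism $\mathcal{U}_{\xi,\xi'}:U_\xi\to U_{\xi'}$ from (\ref{uxi}), which is available precisely because $\xi\preceq\xi'$, together with the normalization idea of Theorem 4.5 in \cite{BG}. First I would write $\Phi=\Psi\gamma_\xi b^{-1}$ as in Section \ref{xixi}, with $\Psi:M\setminus D\to\Lambda^+_{\mathrm{alg}}G^\C$ satisfying $\Phi H_+=\Psi\gamma_\xi H_+$, so that by definition $\mathcal{U}_{\xi,\xi'}(\Phi)=\Psi\gamma_{\xi'}H_+$. By Proposition \ref{popo} this is already a new extended solution into $U_{\xi'}$; the content of the proposition is to produce a \emph{constant} left-translate of $\Phi$ itself (not of $\Psi\gamma_{\xi'}H_+$) that lands in $U_{\xi'}$. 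The extra hypothesis $\g_0^\xi=\g_0^{\xi'}$ is what makes this possible, and identifying exactly how it enters is the crux.

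The key structural observation is that $\xi'-\xi$ lies in the centralizer of $\xi$. Indeed, $\xi\preceq\xi'$ forces $\xi$ and $\xi'$ to be simultaneously diagonalizable, and the condition $\g_0^\xi=\g_0^{\xi'}$ means that $\xi$ and $\xi'$ have the same zero-eigenspace, hence the same centralizer in $\g^\C$; equivalently $\zeta:=\xi'-\xi$ satisfies $[\zeta,\g_0^\xi]=0$, so that $\gamma_\zeta=\gamma_{\xi'}\gamma_\xi^{-1}$ commutes with $P_\xi$ and with $\gamma_\xi$. Thus I would set $\gamma:=\gamma_{\xi'-\xi}=\gamma_{\xi'}\gamma_\xi^{-1}$, a constant loop in $\Omega_{\mathrm{alg}}G$, and compute
\begin{displaymath}
\gamma\,\Phi\, H_+=\gamma_{\xi'}\gamma_\xi^{-1}\,\Psi\gamma_\xi H_+.
\end{displaymath}
The plan is then to commute $\gamma_{\xi'}\gamma_\xi^{-1}$ past $\Psi$ and $\gamma_\xi$: one checks that conjugation by $\gamma_\xi^{-1}$ sends $\Lambda^+_{\mathrm{alg}}G^\C$ into itself modulo the isotropy, and that because $\xi'-\xi$ centralizes $\g_0^\xi$ the resulting loop still lies in $\Lambda^+_{\mathrm{alg}}G^\C\cdot\gamma_{\xi'}H_+$, i.e. in $U_{\xi'}H_+$. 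Concretely, using Lemma \ref{adj} the adjoint action of $\gamma_{\xi'-\xi}$ multiplies each graded piece $\g^\xi_i\cap\g^{\xi'}_k$ by $\lambda^{k-i}$, and (\ref{gs1}) guarantees $k-i\le 0$ on the relevant pieces, so the conjugate $\gamma_{\xi'-\xi}\,\Psi\,\gamma_{\xi'-\xi}^{-1}$ remains in $\Lambda^+_{\mathrm{alg}}G^\C$ off $D$.

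I would finish by verifying that $\gamma\Phi$ indeed factors as $\widetilde\Psi\gamma_{\xi'}H_+$ for some $\widetilde\Psi:M\setminus D\to\Lambda^+_{\mathrm{alg}}G^\C$, which places it in $U_{\xi'}$ by the very definition $U_{\xi'}H_+=\Lambda^+_{\mathrm{alg}}G^\C\gamma_{\xi'}H_+$. The main obstacle I anticipate is precisely the bookkeeping in this last commutation step: one must confirm that multiplying by the constant loop $\gamma_{\xi'-\xi}$ does not destroy membership in $\Lambda^+_{\mathrm{alg}}G^\C$, and this is exactly where the hypothesis $\g_0^\xi=\g_0^{\xi'}$ is essential—without it, $\xi'-\xi$ would fail to centralize $\g_0^\xi$, a nontrivial $\g_0$-component of $\Psi$ could pick up a negative power of $\lambda$ under conjugation, and the product would leave $\Lambda^+_{\mathrm{alg}}G^\C$. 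I would therefore treat the grading $\g^\xi_i=\bigoplus_{0\le k\le i}\g^\xi_i\cap\g^{\xi'}_k$ from (\ref{gs1}) carefully, checking the effect of $\mathrm{Ad}(\gamma_{\xi'-\xi})$ on each summand and confirming nonnegativity of the exponents, which is the technical heart of the argument.
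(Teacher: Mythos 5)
Your proposal has a genuine gap, and it is fatal to the approach as written. You propose to take the \emph{universal} constant loop $\gamma=\gamma_{\xi'-\xi}=\gamma_{\xi'}\gamma_\xi^{-1}$, independent of $\Phi$. Writing $\gamma_{\xi'-\xi}\Psi\gamma_\xi H_+=(\gamma_{\xi'-\xi}\Psi\gamma_{\xi'-\xi}^{-1})\gamma_{\xi'}H_+$ is fine, but the commutation step then fails: as you yourself compute, $\mathrm{Ad}(\gamma_{\xi'-\xi})$ multiplies $\g^\xi_i\cap\g^{\xi'}_k$ by $\lambda^{k-i}$, and for $i\geq 0$ condition (\ref{gs1}) gives $0\leq k\leq i$, i.e.\ $k-i\leq 0$ — a \emph{nonpositive} power of $\lambda$. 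This pushes elements \emph{out of} $\Lambda^+_{\mathrm{alg}}G^\C$ rather than keeping them in: the constant term $\Psi(0)$ is a generic element of $G^\C$ (even after normalizing modulo the isotropy it lies in $\exp\bigl(\bigoplus_{i>0}\g^\xi_i\bigr)$), so it has components in pieces with $k<i$, which acquire strictly negative powers of $\lambda$ under conjugation. Your conclusion contradicts your own exponent count. A further symptom that the route is wrong: your argument nowhere uses that $\Phi$ is an extended solution, so if it worked it would prove the purely loop-theoretic statement $\gamma_{\xi'-\xi}\,U_\xi\subset U_{\xi'}$, which is false — left translation by $\gamma_\eta$ does not map Bruhat cells into Bruhat cells in this way.

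The paper's proof takes a different and essentially analytic route, and the constant loop $\gamma$ it produces \emph{depends on} $\Phi$. One first shows the purely algebraic inclusion $\mathfrak{p}^\xi_{i+1}\subset\mathfrak{p}^{\xi-\xi'}_i$ for all $i\geq 0$ — note the index shift, and this is exactly where $\g_0^\xi=\g_0^{\xi'}$ enters: for $X\in\g^\xi_j\cap\g^{\xi'}_k$ with $j>0$ one gets $k>0$, hence $X\in\g^{\xi-\xi'}_{j-k}$ with $j-k<j$. Since super-horizontality says $\Psi^{-1}\Psi_z$ takes values in $\bigoplus_{i\geq 0}\lambda^i\mathfrak{p}^\xi_{i+1}$, the shifted inclusion places it inside $\bigoplus_{i\geq 0}\lambda^i\mathfrak{p}^{\xi-\xi'}_i$, the isotropy algebra at $\gamma_{\xi-\xi'}H_+$; hence $\mathcal{U}_{\xi,\xi-\xi'}(\Phi)$ has vanishing $(1,0)$-derivative. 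Being also holomorphic (Proposition \ref{popo}), it is constant, say equal to $\gamma^{-1}H_+$, so $\Psi\gamma_{\xi-\xi'}=\gamma^{-1}b$ with $b:M\to\Lambda^+_{\mathrm{alg}}G$, and then $\Phi H_+=\Psi\gamma_{\xi-\xi'}\gamma_{\xi'}H_+=\gamma^{-1}b\gamma_{\xi'}H_+$ gives $\gamma\Phi:M\setminus D\to U_{\xi'}$. So the hypothesis $\g_0^\xi=\g_0^{\xi'}$ is used not to make $\xi'-\xi$ centralize anything, but to gain one degree in the parabolic filtration and thereby convert super-horizontality into constancy of a projected extended solution. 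To repair your argument you would need to replace $\gamma_{\xi'-\xi}$ by this $\Phi$-dependent constant.
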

\begin{proof}First we claim that $ \mathfrak{p}^\xi_{i+1}\subset \mathfrak{p}^{\xi-\xi'}_i$, for all $i\geq 0$. In fact:

 Recall that $\p_i^\xi\subset \p_i^{\xi'}$ for all $i\geq 0$ is equivalent to
$$\g^\xi_j=\bigoplus_{0\leq k\leq j} \g^\xi_j\cap\g^{\xi'}_k$$
for all $j\geq 0$.
So, take  $X\in \g^\xi_j\cap \g_k^{\xi'}$, with $0\leq k\leq j\leq i+1$. Then
$$\mathrm{ad}(\xi-\xi')(X)=\sqrt{-1}(j-k)X,$$
that is, $X\in \g_{j-k}^{\xi-\xi'}$. If $j>0$, then $k>0$, since $\g_0^\xi=\g_0^{\xi'}$, and, consequently,
\begin{equation}\label{bobo1}
  \g_j^\xi\subset \bigoplus_{0\leq r< j}\g_r^{\xi-\xi'}\subset \p_i^{\xi-\xi'}.
\end{equation}
 If $j=0$, then $k=0$ and $X\in \g_0^{\xi-\xi'}\subset \p_i^{\xi-\xi'},$ that is,
 \begin{equation}\label{bobo2}
   \g_0^\xi\subset \p_i^{\xi-\xi'}.
 \end{equation}
On the other hand, by taking the conjugate of (\ref{bobo1}), we see that
\begin{equation}\label{bobo3}
 \g_{-j}^\xi\subset \bigoplus_{0\leq r< j}\g_{-r}^{\xi-\xi'}\subset \p_i^{\xi-\xi'}.
\end{equation}
Finally, from (\ref{bobo1}), (\ref{bobo2}) and  (\ref{bobo3}), we conclude that
$$\p_{i+1}^\xi=\bigoplus_{j\leq i}\g^\xi_j\subset \p_i^{\xi-\xi'}.$$

Now, write $\Phi H_+=\Psi\gamma_\xi H_+$, with $\Psi:M\setminus D\to \Lambda^+_{\mathrm{alg}}G$. Since $\Phi$ is super-horizontal, $\Psi^{-1}\Psi_z$ takes values in
 $$ \Lambda^+_{\mathrm{alg}}\g^\C\cap \lambda^{-1} \gamma_\xi \Lambda^+\g^\C \gamma_\xi^{-1}=  \bigoplus_{0\leq i}\lambda^i\mathfrak{p}^\xi_{i+1}\subset \bigoplus_{0\leq i }\lambda^i\mathfrak{p}^{\xi-\xi'}_{i}= \Lambda^+_{\mathrm{alg}}\g^\C\cap \gamma_{\xi-\xi'} \Lambda^+\g^\C \gamma_{\xi-\xi'}^{-1},$$
  which means that $\mathcal{U}_{\xi,\xi-\xi'}(\Phi)$ (clearly, $\xi\preceq\xi-\xi'$) is anti-holomorphic. Since $\mathcal{U}_{\xi,\xi-\xi'}(\Phi)$ is also holomorphic, we conclude that it is constant.
  Hence $\Psi\gamma_{\xi-\xi'}=\gamma^{-1} b$ for some constant loop $\gamma^{-1}\in \Omega_{\mathrm{alg}}G$ and some $b:M\to \Lambda^+_{\mathrm{alg}}G$. Then
  $$\Phi H_+=\Psi\gamma_\xi H_+=\Psi\gamma_{\xi-\xi'}\gamma_{\xi'}H_+= \gamma^{-1} b\gamma_{\xi'}H_+,$$
  which implies that $\gamma\Phi:M\to U_{\xi'}$.

\end{proof}

\begin{prop}\label{norm2}
 \emph{Let  $\Phi:M\setminus D\to U_\xi$ be an extended solution. Write $\Phi H_+=\Psi\gamma_\xi H_+$, with $\Psi:M\setminus D\to \Lambda^+_{\mathrm{alg}}G$ and   $\Psi^{-1}\Psi_z=\sum_{i\geq 0} X'_i\lambda^i$.
 Take $\xi '\in I$, with $\xi\preceq \xi'$,  such that: $\mathrm{Im}{X_0'}^1\subset \g_0^{\xi '}$, where  ${X'_0}^1$ is the component of $X'_0$ over $\g^\xi_1$; and (for $j>1$)
 \begin{equation}\label{toing}
 \g_j^\xi\subset \bigoplus_{0\leq i<j}\g^{\xi'}_i.\end{equation}
  Then $\mathcal{U}_{\xi,\xi'}(\Phi):M\setminus D\to U_{\xi'}$ is constant. Consequently, there exists some constant loop $\gamma\in \Omega_{\mathrm{alg}}G$ such that $\gamma\Phi:M\setminus D\to U_{\xi-\xi'}$. }
\end{prop}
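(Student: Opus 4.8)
The plan is to show that $\mathcal{U}_{\xi,\xi'}(\Phi)$ is simultaneously holomorphic and anti-holomorphic, hence constant, by the same mechanism used in Proposition \ref{nor}. The only new ingredient is that here we are given hypotheses on $X_0'$ and on $\g_j^\xi$ for $j>1$ separately, rather than the clean assumption $\g_0^\xi=\g_0^{\xi'}$, so the bookkeeping on the bottom graded piece must be handled by hand.

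First I would recall that $\mathcal{U}_{\xi,\xi'}(\Phi)$ is automatically holomorphic by Proposition \ref{popo}, since $\xi\preceq\xi'$. So the real content is to prove it is anti-holomorphic, i.e. that $D\mathcal{U}_{\xi,\xi'}$ kills the $(1,0)$-derivative of $\Phi$. Because the derivative of $\mathcal{U}_{\xi,\xi'}$ is represented at the Lie-algebra level simply by $[\eta]\mapsto[\eta]$ (as computed in the proof of the super-horizontality lemma), anti-holomorphicity amounts to showing that $\Psi^{-1}\Psi_z$, which takes values in $\bigoplus_{i\geq 0}\lambda^i\mathfrak{p}^\xi_{i+1}$ by super-horizontality and (\ref{im}), actually lands in
\[
\Lambda^+_{\mathrm{alg}}\g^\C\cap \gamma_{\xi'} \Lambda^+\g^\C \gamma_{\xi'}^{-1}=\bigoplus_{i\geq 0}\lambda^i\mathfrak{p}^{\xi'}_i.
\]
So the reduction is term-by-term: I must verify $\mathrm{Im}X_i'\subset \mathfrak{p}^{\xi'}_i$ for every $i\geq 0$.

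The key step is the inclusion $\mathfrak{p}^\xi_{i+1}\subset \mathfrak{p}^{\xi'}_i$ for all $i\geq 1$, which follows from hypothesis (\ref{toing}): writing $\mathfrak{p}^\xi_{i+1}=\bigoplus_{j\leq i}\g_j^\xi$, the pieces with $1\leq j\leq i$ sit inside $\bigoplus_{0\leq k<j}\g^{\xi'}_k\subset\mathfrak{p}^{\xi'}_{i-1}$ by (\ref{toing}) (for $j>1$) and by $\xi\preceq\xi'$ applied to $\g_1^\xi$, while $\g_0^\xi$ and the negative pieces $\g_{-j}^\xi$ are handled by taking conjugates, exactly as in the computation (\ref{bobo1})--(\ref{bobo3}) of Proposition \ref{nor}. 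This settles $\mathrm{Im}X_i'\subset\mathfrak{p}^\xi_{i+1}\subset\mathfrak{p}^{\xi'}_i$ for all $i\geq 1$. The main obstacle is the remaining case $i=0$: here super-horizontality only gives $\mathrm{Im}X_0'\subset\mathfrak{p}^\xi_1=\g_0^\xi\oplus\g_1^\xi\oplus\cdots$ wait---$\mathfrak{p}^\xi_1=\bigoplus_{j\leq 1}\g_j^\xi$, so the component I must control is precisely ${X_0'}^1$ over $\g_1^\xi$, and this is exactly what the hypothesis $\mathrm{Im}{X_0'}^1\subset\g_0^{\xi'}$ supplies, while the components over $\g_j^\xi$ with $j\leq 0$ already lie in $\mathfrak{p}^{\xi'}_0$ since $\g_0^\xi\subset\g_0^{\xi'}$ follows from $\xi\preceq\xi'$ together with the reasoning of (\ref{bobo2}). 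Assembling these gives $\mathrm{Im}X_0'\subset\mathfrak{p}^{\xi'}_0$.

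With $\mathrm{Im}X_i'\subset\mathfrak{p}^{\xi'}_i$ established for all $i\geq 0$, the map $\mathcal{U}_{\xi,\xi'}(\Phi)$ is anti-holomorphic; being also holomorphic, it is constant. Finally I would conclude exactly as at the end of Proposition \ref{nor}: constancy of $\mathcal{U}_{\xi,\xi'}(\Phi)$ means $\Psi\gamma_{\xi'}=\gamma^{-1}b$ for some constant loop $\gamma^{-1}\in\Omega_{\mathrm{alg}}G$ and some $b:M\setminus D\to\Lambda^+_{\mathrm{alg}}G$, whence
\[
\Phi H_+=\Psi\gamma_\xi H_+=\Psi\gamma_{\xi'}\gamma_{\xi-\xi'}H_+=\gamma^{-1}b\,\gamma_{\xi-\xi'}H_+,
\]
so that $\gamma\Phi$ takes values in $U_{\xi-\xi'}$. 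I expect the delicate point to be purely the $i=0$ grading bookkeeping, since the hypotheses have been split precisely to make that borderline case work.
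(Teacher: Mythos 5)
The proposal is correct and follows essentially the paper's own proof: the same two-case verification (the $i=0$ term controlled by the hypothesis on ${X'_0}^1$ together with $\g_0^\xi\subset\g_0^{\xi'}$, the $i\geq 1$ terms by (\ref{toing}) plus conjugation as in (\ref{bobo1})--(\ref{bobo3})), the only cosmetic difference being that you deduce constancy from ``holomorphic and anti-holomorphic'' as in Proposition \ref{nor}, whereas the paper invokes Lemma \ref{poi} -- these are equivalent here because (\ref{im}) already confines $X'_i$ to $\mathfrak{p}^{\xi'}_{i+1}$, so killing the $\g^{\xi'}_{i+1}$-component is the same as landing in $\mathfrak{p}^{\xi'}_i$. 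One index slip worth fixing: $\mathfrak{p}^\xi_{i+1}=\bigoplus_{j\leq i+1}\g^\xi_j$, not $\bigoplus_{j\leq i}\g^\xi_j$, and the omitted top piece $j=i+1$ is precisely the one that matters; since $i+1\geq 2$ hypothesis (\ref{toing}) does apply to it, so your stated inclusion $\mathfrak{p}^\xi_{i+1}\subset\mathfrak{p}^{\xi'}_i$ for $i\geq 1$ is nevertheless correct.
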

\begin{proof}
Since $\xi\preceq \xi'$, $\mathcal{U}_{\xi,\xi'}(\Phi)$ is an extended solution. Hence, taking account Lemma \ref{poi}, we only have to check that the component of $X'_i$ over $\g^{\xi'}_{i+1}$ vanishes for all $i\geq 0$.

We know, by (\ref{im}), that $X'_0$ takes values in $\mathfrak{p}_1^\xi=\bigoplus_{i\leq 1}\g_i^\xi$. On the other hand, $\g_1^{\xi'}=\bigoplus_{i\geq 1} \g^\xi_i\cap  \g^{\xi'}_1$, since $\xi\preceq \xi'$. But, by hypothesis, $\mathrm{Im}{X'}_0^1\subset \g_0^{\xi '}$. Hence, the component of $X'_0$ over $\g_1^{\xi'}$ vanishes.

For all  $i\geq 1$, we have $\g_i^{\xi'}=\bigoplus_{j\geq i} \g^\xi_j\cap  \g^{\xi'}_i$, since $\xi\preceq \xi'$. Hence, if (\ref{toing}) holds, it follows that  $\g_i^{\xi'}=\bigoplus_{j> i} \g^\xi_j\cap  \g^{\xi'}_i$. Since, by (\ref{im}), $X'_i$ takes values in $\mathfrak{p}^\xi_{i+1}=\bigoplus_{j\leq i+1}\g_j^\xi$, we conclude that
 the component of $X'_i$ over $\g^{\xi'}_{i+1}$ vanishes for all $i\geq 1$.

The argument used in the proof of Proposition \ref{nor} can also be applied here to prove the existence of a constant loop $\gamma\in \Omega G$ such that  $\gamma \Phi:M\setminus D \to U_{\xi-\xi'}$.
\end{proof}

\subsection{Harmonic maps into inner $G$-symmetric spaces}

Let $G$ be a compact (connected) Lie group. It is well known
\cite{BG} that each connected component of $\sqrt{e}=\{g\in
G\,:\,\,g^2=e\}$ is a compact inner symmetric space. Moreover,
the embedding of each component of $\sqrt{e}$ in $G$ is totally
geodesic.

Define the involution $\mathcal{I}:\Omega G  \rightarrow \Omega G$ by $\mathcal{I}(\gamma)(\lambda)
=\gamma(-\lambda)\gamma(-1)^{-1}.$ Write $$\Omega^\mathcal{I}
G=\{\gamma\in \Omega G :\,\mathcal{I}(\gamma)=\gamma\}$$
 for the fixed set of $\mathcal{I}$. Let $M$ be a Riemann surface and
 $\Phi:M\rightarrow \Omega^\mathcal{I} G$ an extended solution. Then
 $\varphi=\Phi_{-1}$ defines a harmonic map from $M$ into a
 connected component of $\sqrt{e}$ (cf. \cite{BG}).
Conversely, if $\varphi:M\rightarrow\sqrt{e}$ is a harmonic map,  there exists an extended
 solution $\Phi:M\rightarrow \Omega^\mathcal{I} G$ such that
 $\varphi=\Phi_{-1}$. Under the identification $\Omega G\cong  Gr(G)$,
$\mathcal{I}$ induces
 an involution on $Gr(G)$, that we shall also denote by $\mathcal{I}$,
 and $\Omega^\mathcal{I} G$ can be identified with
 \bdm
 Gr^\mathcal{I}(G)=\{W\in Gr(G):
 \,\mbox{if $s(\lambda)\in W$ then $s(-\lambda)\in W$}\}.
 \edm
Corresponding to the extended solution $\Phi:M\rightarrow \Omega^\mathcal{I}
G$, consider $W=\Phi H_+:M \rightarrow
Gr^\mathcal{I}(G)$. We write $ W=W^{\textrm{even}}\oplus
W^{\textrm{odd}}$, where $W^{\textrm{even}}$ and $W^{\textrm{odd}}$
are the $+1$ and $-1$ eigenspaces of $\mathcal{I}$, respectively.

Denote $U_\xi^{\mathcal{I}}=U_\xi\cap \Omega^{\mathcal{I}}G$. We have:
\begin{prop}\label{proposition}
 \emph{ If $\xi\preceq \xi'$, then $\mathcal{U}_{\xi,\xi'}(U_\xi^{\mathcal{I}})\subset U_{\xi'}^{\mathcal{I}}$.  }
\end{prop}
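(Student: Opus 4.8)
The plan is to show that the morphism $\mathcal{U}_{\xi,\xi'}$ commutes with the involution $\mathcal{I}$, so that it maps the fixed set $U_\xi^{\mathcal{I}}$ into the fixed set $U_{\xi'}^{\mathcal{I}}$. The key observation is the explicit description \eqref{uxi}: any element of $U_\xi$ can be written as $\Psi\gamma_\xi H_+$ for some $\Psi\in\Lambda^+_{\mathrm{alg}}G^\C$, and $\mathcal{U}_{\xi,\xi'}(\Psi\gamma_\xi H_+)=\Psi\gamma_{\xi'}H_+$ replaces the central factor $\gamma_\xi$ by $\gamma_{\xi'}$ while leaving the $\Lambda^+_{\mathrm{alg}}G^\C$-part $\Psi$ untouched. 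So the heart of the matter is to understand how $\mathcal{I}$ interacts with this factorization.

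First I would record how $\mathcal{I}$ acts at the loop-group level. Since $\mathcal{I}(\gamma)(\lambda)=\gamma(-\lambda)\gamma(-1)^{-1}$, it is natural to introduce the substitution $\lambda\mapsto -\lambda$, which sends $\Lambda^+_{\mathrm{alg}}G^\C$ to itself (it preserves holomorphicity in $|\lambda|\le 1$ and algebraicity) and sends each homomorphism $\gamma_\xi$ to $\gamma_\xi(-\lambda)$. The crucial point is that $\gamma_\xi(-\lambda)$ differs from $\gamma_\xi(\lambda)$ only by a constant element of $G^\C$: because $\gamma_\xi$ takes the value $\lambda^j$ on $\g^\xi_j$ by Lemma \ref{adj}, replacing $\lambda$ by $-\lambda$ multiplies the $\g^\xi_j$-block by $(-1)^j$, which is exactly conjugation by the fixed involution $\sigma=\gamma_\xi(-1)$. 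I would verify that this same constant $\sigma$ works simultaneously for $\gamma_\xi$ and $\gamma_{\xi'}$ when $\xi\preceq\xi'$: one checks that $\gamma_\xi(-1)$ and $\gamma_{\xi'}(-1)$ induce compatible $\mathbb{Z}/2$-gradings, using \eqref{gs1} which expresses $\g^\xi_j$ as a sum of pieces $\g^\xi_j\cap\g^{\xi'}_k$ with $0\le k\le j$; the parity of $j$ governs the sign on $H_+$ in a way consistent across the two elements.

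With this in hand, the computation is essentially formal. Given $W=\Psi\gamma_\xi H_+\in U_\xi^{\mathcal{I}}$, the condition $\mathcal{I}(W)=W$ translates into a statement relating $\Psi(-\lambda)$ to $\Psi(\lambda)$ modulo the isotropy group, and I would push this same relation through the $\mathcal{U}_{\xi,\xi'}$ image $\Psi\gamma_{\xi'}H_+$. Concretely, since $\mathcal{I}$ acts on $\Psi\gamma_\xi H_+$ by the substitution $\lambda\mapsto-\lambda$ followed by the constant correction, and since this action does not see the difference between $\gamma_\xi$ and $\gamma_{\xi'}$ beyond the constant involution (which is a genuine element of $G\subset\Omega G$ and hence absorbed into the basepoint condition), invariance of $W$ forces invariance of $\mathcal{U}_{\xi,\xi'}(W)$.

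The main obstacle I anticipate is the bookkeeping around the constant correction $\gamma_\xi(-1)$: one must confirm that the involution $\mathcal{I}$ restricted to $U_\xi$ really is implemented by the clean rule ``substitute $-\lambda$ and conjugate by a single constant,'' and that this constant is compatible for $\xi$ and $\xi'$ under $\xi\preceq\xi'$. Once the grading compatibility encoded in \eqref{gs1} is used to show that the parities match, the inclusion $\mathcal{U}_{\xi,\xi'}(U_\xi^{\mathcal{I}})\subset U_{\xi'}^{\mathcal{I}}$ drops out immediately, because $\mathcal{U}_{\xi,\xi'}$ is defined purely in terms of $\Psi$, which $\mathcal{I}$ transforms identically in both the domain and the target.
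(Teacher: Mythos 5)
Your proposal is correct and follows essentially the same route as the paper: both rest on the observation that, in the Grassmannian model, $\mathcal{I}$ is just the substitution $\lambda\mapsto-\lambda$ (the constant factor $\gamma(-1)^{-1}$ preserves $H_+$ and is invisible), so it sends $\Psi\gamma_\xi H_+$ to $\tilde{\Psi}\gamma_\xi H_+$ with $\tilde{\Psi}(\lambda)=\Psi(-\lambda)$ still in $\Lambda^+_{\mathrm{alg}}G^\C$, and $\mathcal{I}$-invariance then transfers from $\Psi\gamma_\xi H_+$ to $\Psi\gamma_{\xi'}H_+$. One remark: the obstacle you anticipate about matching the constants $\gamma_\xi(-1)$ and $\gamma_{\xi'}(-1)$ is a non-issue since both act trivially on $H_+$; the genuinely load-bearing step, which the paper makes explicit, is that $\gamma_\xi b\gamma_\xi^{-1}\in\Lambda^+_{\mathrm{alg}}G^\C$ forces $\gamma_{\xi'}^{-1}\gamma_\xi b\gamma_\xi^{-1}\gamma_{\xi'}\in\Lambda^+_{\mathrm{alg}}G^\C$ when $\xi\preceq\xi'$ --- the inclusion of isotropy subgroups from Lemma \ref{lema3} that makes $\mathcal{U}_{\xi,\xi'}$ well defined and that your ``push the relation through modulo the isotropy group'' step implicitly uses.
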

\begin{proof} For $\Phi\in U_\xi$, write $\Phi H_+=\Psi \gamma_\xi H_+$, with $\Psi\in \Lambda^+_{\mathrm{alg}}G^\C$, and $\tilde{\Psi}(\lambda)=\Psi(-\lambda)$.
Start to observe that $\Phi\in U_\xi^{\mathcal{I}}$ if, and only if,
$\tilde{\Psi}=\Psi\gamma_\xi b\gamma_\xi^{-1}$
for some $b\in \Lambda^+_{\mathrm{alg}}G^\C$.

Now, suppose that $\Phi\in U_\xi^{\mathcal{I}}$. Then
$$\tilde{\Psi}=\Psi\gamma_\xi b\gamma_\xi^{-1}=\Psi\gamma_{\xi'}\gamma_{\xi'}^{-1}\gamma_\xi b\gamma_\xi^{-1}\gamma_{\xi'}\gamma_{\xi'}^{-1}.$$
Since $\Psi,\tilde{\Psi}\in  \Lambda^+_{\mathrm{alg}}G^\C$, we also have  $\gamma_\xi b\gamma_\xi^{-1}\in \Lambda^+_{\mathrm{alg}}G^\C$. Hence, by Lemma \ref{lema3}, $$\tilde{b}=\gamma_{\xi'}^{-1}\gamma_\xi b\gamma_\xi^{-1}\gamma_{\xi'}\in\Lambda^+_{\mathrm{alg}}G^\C,$$
and from here we conclude that  $\mathcal{U}_{\xi,\xi'}(\Phi)\in U_{\xi'}^{\mathcal{I}}$.
\end{proof}

Then, our previous results concerning factorizations and normalizations also hold for the symmetric case.

  \section{Harmonic maps of finite uniton number into $G_2$}\label{pupu}
  Let $\varphi:M\to G_2$ be an harmonic map of finite uniton number with extended solution ${\Phi}:M\to \Omega_{\mathrm{alg}}G_2$. By Theorem \ref{usd}, there exist some ${\xi}\in I$ of the form (\ref{xi})  and some finite subset $D$ of $M$ such that ${\Phi}:M\setminus D\to U_{{\xi}}$. Write, as usual, $W=\Phi H_+=\Psi\gamma_\xi H_+$, with $\Psi:M \setminus D\to \Lambda_{\mathrm{alg}}^+G_2^\C$.

  \subsection{Estimation of the uniton number and factorization formulae}

  Next we estimate the uniton number of $\varphi$ and we give factorization formulae for the correspondig \emph{normalized} extended solution in terms of the Grassmannian model. In particular, we will see that $r(\varphi)\leq 6$.

\subsubsection{Case $0<2l<k$} Set $\xi^\mathrm{n}=2\eta_1+\eta_2=H_1+H_2$. Then $\g^{\xi^\mathrm{n}}_0=\g^{\xi}_0=\mathfrak{t}^\C$ and ${\xi}\preceq\xi^\mathrm{n}$, which implies, by   Proposition \ref{nor}, that exists a constant loop $\gamma\in \Omega G_2$ such that   $\Phi^\mathrm{n}=\gamma \Phi:M\setminus D\to
  U_{\xi^\mathrm{n}}$. In terms of the Grassmannian model, $\gamma$ is obtained as follows:

  Since $\xi-\xi^\mathrm{n}=(k-l-2)\eta_1+(l-1)\eta_2$, then $\gamma_{\xi-\xi^\mathrm{n}}=\gamma_{\eta_1}^{k-l-2}\gamma_{\eta_2}^{l-1}$. Hence, taking account (\ref{wxii}), we have
\begin{multline*}
W_{\xi-\xi^\mathrm{n}}\equiv \gamma_{\xi-\xi^\mathrm{n}}H_+= \lambda^3(W_\xi\cap\lambda^{-k}H_+)+\lambda^2(W_\xi\cap\lambda^{-k+l}H_+)+\lambda(W_\xi\cap\lambda^{-l}H_+)\\  +(W_\xi\cap H_+)+\lambda^{-1}(W_\xi\cap\lambda^lH_+)+\lambda^{-2}(W_\xi\cap\lambda^{k-l}H_+)+\lambda^{k-3}H_+.\end{multline*}
But $\gamma^{-1}=\mathcal{U}_{\xi,\xi-\xi^\mathrm{n}}(\Phi)$. Then
\begin{multline*}
V\equiv \gamma^{-1}H_+=\Psi W_{\xi-\xi^\mathrm{n}}=\lambda^3(W\cap\lambda^{-k}H_+)+\lambda^2(W\cap\lambda^{-k+l}H_+)+\lambda(W\cap\lambda^{-l}H_+)\\  +(W\cap H_+)+\lambda^{-1}(W\cap\lambda^lH_+)+\lambda^{-2}(W\cap\lambda^{k-l}H_+)+\lambda^{k-3}H_+.\end{multline*}

   Now, define  $W^\mathrm{n}=\Phi^\mathrm{n}
H_+$. Clearly, we have $\lambda^{3}H_+\subset W^\mathrm{n}(z)\subset \lambda^{-3}H_+$ for each $z\in M\setminus D$, hence $r(\varphi)\leq 6$.
By applying pointwise the algebraic results of Section \ref{xixi}, the
 sequence
$$\xi^\mathrm{n}=\xi_3, \xi_2=\eta_1+\eta_2=H_1,\xi_1=\eta_1=H_2,\xi_0=0,$$  which, by Lemma \ref{ord}, satisfies $\xi^\mathrm{n}\preceq \xi_i $ for $i=0,1,2,3$,  gives a
factorization of $\Phi^\mathrm{n}$:
 $$\Phi^\mathrm{n}(z)=\Phi_1(z)\big(\Phi_1^{-1}(z)\Phi_2(z)\big)\big(\Phi_2^{-1}(z)\Phi_3(z)\big).$$
This is a factorization  of length  $3$ and type $(1,1,1)$.   By Proposition \ref{popo}, each $\Phi_i=\mathcal{U}_{\xi^\mathrm{n},\xi_i}({\Phi}^\mathrm{n}):M\setminus D \to U_{\xi_i}$ is an extended solution. In terms of the Grassmanian model, (\ref{1}), (\ref{2}) and (\ref{3}) give:
 \begin{align}
\nonumber W^3&=\Phi_3 H_+=W^\mathrm{n}\\
\label{factorization2} W^2&=\Phi_2 H_+=\lambda \big(W\cap \lambda^{-3}H_+\big)+\big(W\cap\lambda^{-1}H_+ \big)+\lambda^{-1}\big(W\cap\lambda^{2}H_+\big);\\
\label{factorization1} W^1&=\Phi_1 H_+=\lambda \big(W\cap \lambda^{-2}H_+\big)+ \lambda^{-1}\big(W\cap\lambda H_+\big)+\lambda H_+.
\end{align}

\begin{rem}The factorization associated to Proposition 4.13 in \cite{BG} is the factorization of  length 2 and  type $(1,2)$ defined by the sequence $\xi=H_1+H_2\preceq H_1 \preceq 0$. Hence, our factorization gives a refinement of this.
\end{rem}
\subsection{Case $l=\frac{k}{2}$} Set  $\xi^\mathrm{n}={\xi}/l$. Since $\g^{{\xi}^\mathrm{n}}_0=\g^{\xi}_0$ and ${{\xi}}\preceq{\xi^\mathrm{n}}$,  by Proposition \ref{nor}, there exists a constant loop $\gamma\in \Omega G_2$ such that   $\Phi^\mathrm{n}=\gamma \Phi:M\setminus D\to
  U_{\xi^\mathrm{n}}$. In terms of the Grassmannian model, $\gamma$ can be obtained as above:
  $$V\equiv\gamma^{-1}H_+=\lambda^2(W\cap\lambda^{-k}H_+)+\lambda(W\cap\lambda^{-l}H_+)+(W\cap H_+)+\lambda^{-1}(W\cap\lambda^lH_+)+\lambda^{k-2}H_+.$$

  In this case,  we have  $\lambda^2 H_+\subset W^\mathrm{n}(z)\subset \lambda^{-2}H_+$, for each $z\in M\setminus{D}$, and $r(\varphi)\leq 4$.
The canonical factorization of $\Phi^\mathrm{n}$ given by  (\ref{5})  is trivial, that is, has length  1.
\subsubsection{Case $l=0$} Set $\xi^\mathrm{n}={\xi}/k$. Again, there exists a constant loop $\gamma\in \Omega G_2$ such that   $\Phi^\mathrm{n}=\gamma \Phi:M\setminus D\to
  U_{\xi^\mathrm{n}}$. In this case,
    $$V\equiv\gamma^{-1}H_+=\lambda(W\cap\lambda^{-k}H_+)+(W\cap H_+)+\lambda^{k-1}H_+.$$
For each $z\in M\setminus{D}$,  $\lambda H_+\subset W^\mathrm{n}(z)\subset \lambda^{-1}H_+$. Consequently, $r(\varphi)\leq 2$.
The canonical factorization of $\Phi^\mathrm{n}$ given by  (\ref{4}) also has length  1.

 \subsection{Frenet frame data for harmonic maps into $G_2$}
As Guest \cite{G} has observed, any smooth map $W:M\to Gr$  corresponding to an extended solution  $\Phi:M\to \Omega^k_{\mathrm{alg}}U(n)$ is \emph{generated} by a certain holomorphic subbundle $X$
of the trivial bundle $\underline{\C}^{2kn}\simeq M\times \lambda^{-k}H_+\big/\lambda^kH_+$ by setting
\begin{equation*}
W=X+\lambda X^{(1)}+\ldots+\lambda^{2k-1}X^{(2k-1)}+\lambda^{k}H_+
\end{equation*}
where $X^{(i)}$ denotes the subbundle spanned by the local holomorphic sections of $X$ and their derivatives up to $i$.

\begin{rem}
 Recall the well known classification of harmonic maps $S^2\to \C P^{n-1}$ by Eells and Wood \cite{EW}: let $\phi:S^2\to\C P^{n-1}$ be any holomorphic map and $f$ a $\C^n$-valued meromorphic function on $S^2$ such that $\phi=\mathrm{Span}\{f\}$; let $i\in\{0,1,\ldots,n-1\}$ and define $\varphi:S^2\to\C P^{n-1}$ by
 $$\varphi=\mathrm{Span}\{f,f',\ldots, f^{(i)}\}\ominus\mathrm{Span}\{f,f',\ldots, f^{(i-1)}\};$$
 then $\varphi$ is harmonic; conversely, all harmonic maps $S^2\to \C P^{n-1}$ arise this way; in other words, every harmonic map $S^2\to\C P^{n-1}$ is an element of the Frenet frame of a rational curve.  Now, if $\{u_1,\ldots,u_r\}$ is a meromorphic spanning set of $X$, these  meromorphic function $u_i$ are analogous to the meromorphic function $f$ of Eells and Wood. For this reason, and following \cite{G}, we call  $\{u_1,\ldots,u_r\}$ a \emph{Frenet frame data} for the corresponding harmonic map.

\end{rem}

Let $\varphi:M\to G_2$ be an harmonic map of finite uniton number with extended solution $\Phi:M\setminus D\to U_\xi\subset \Omega_{\mathrm{alg}}G_2$, for some discrete subset $D$ and $\xi\in I$. Define $W=\Phi H_+$, which satisfies $\overline{W}^\perp=\lambda W$ and $W\cdot  W\subset W$.  As we have seen above, we may suppose that $\lambda^{3}H_+ \subset W\subset \lambda^{-3}H_+$.
If $X$ generates $W$, then the algebraic conditions on $W$ mean  that
\begin{align}
\label{s1} \langle \lambda^{i+1} s^{(i)}, \lambda^{-j}\overline{u^{(j)}}\rangle_H =0,\quad \quad \langle \lambda^{i+j} {s^{(i)}}\cdot u^{(j)},  \lambda^{-k-1}\overline{w^{(k)}}\rangle_H =0  \end{align}
for all $i,j,k=0,1,\ldots,5$ and all meromorphic sections $s$, $u$ and $w$ of $X$.
\begin{rem}
Since all meromorphic functions on the Riemann sphere $S^2$ are  rational functions,  we can always choose a meromorphic spanning set $\{u_1,\ldots,u_r\}$ of $X$ formed by polynomials in $z$ if $M=S^2$:
$$u_i(z)=P_i^0(\lambda)+P_i^1(\lambda)z+P_i^2(\lambda)z^2+\ldots +P_i^{n_i}(\lambda)z^{n_i}.$$
In this case, (\ref{s1}) becomes an algebraic system of quadratic and cubic equations for the coefficients in $\C^7$ of the $\lambda,\lambda^{-1}$-polynomials $P_i^j$.
On the other hand, all harmonic maps from $S^2$ into a compact Lie group have finite uniton number. Hence, all harmonic maps from $S^2$ into $G_2$ can be obtained by solving a algebraic system of quadratic and cubic equations.
\end{rem}

  Consider the holomorphic subbundles $A_i = p_i(W\cap\lambda^iH_+)$.
Next we give a description of the Frenet frame data  associated to such extended solutions.
Again, we have to distinguish three cases: $k=3$ and $l=1$; $k=2$ and $l=1$; $k=1$ and $l=0$.

\subsubsection{Case $k=3$ and $l=1$}  In this case,  $\xi=H_1+H_2$ and
\begin{equation}\label{seila1}
u_\xi(W)=\lambda^{-3}A+ \lambda^{-2}D+ \lambda^{-1}A^{a}+  \overline{A^a}^\perp+\lambda \overline{D}^\perp +\lambda^{2} \overline{A}^\perp+\lambda^3H_+,\end{equation}
where, for each $z\in M$, $A(z)=A_{-3}(z)$ is an isotropic line, $D(z)=A_{-2}(z)$ is a complex coassociative $2$-plane containing $A(z)$, and $A^{a}(z)=A_{-1}(z)$ is the annihilator of $A(z)$. 

Recall that  a subbundle of $\underline{\C}^n$ is said to be \emph{full} if it is not contained in a subspace $V\subsetneq \C^n$.
So, let us suppose that $A$ is full. In this case, $W$ is generated by a line bundle
$$X=\mathrm{Span} \{s=s_{-3}\lambda^{-3}+s_{-2}\lambda^{-2}+s_{-1}\lambda^{-1}+s_0+s_{1}\lambda+s_2\lambda^{2}\},$$
with $s_i:M\to\C^7$ meromorphic functions satisfying (\ref{s1}) and $s_{-3}$ a meromorphic section of $A$. With respect to the canonical factorization of $\Phi$ given by  (\ref{factorization2}) and  (\ref{factorization1}), the generating vector bundles of $W^2$ and $W^1$ are given, respectively, by:
$$X_2=\mathrm{Span}\{\lambda s,\lambda^2s^{(2)},\lambda^{4}s^{(5)}\},\quad X_1=\mathrm{Span}\{\lambda^2 s,\lambda^2s^{(1)},\lambda^{3}s^{(3)}\}.$$

If $A$ is not full, then $r(\varphi)\leq 4$:

\begin{lem}\label{nfull}
 \emph{ If $A$ is not full, then there are a constant loop $\gamma\in\Omega G_2$ and $\xi^{\mathrm{n}}\in I$ with $\kappa(\xi^{\mathrm{n}})\leq 2$ such that $\gamma\Phi:M\setminus D\to U_{\xi^{\mathrm{n}}}$. }
\end{lem}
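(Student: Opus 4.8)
The plan is to reduce the uniton number by applying Proposition \ref{norm2} after reading off the fullness of $A$ from the first-order data of $\Phi$. Throughout I keep the normalization $\xi=H_1+H_2$ of the case $k=3$, $l=1$, so that the seven weight spaces decompose $\C^7$ into one-dimensional pieces $E_{-3}=A,\,E_{-2}=D\ominus A,\ldots,E_3$, and by (\ref{gis}) and (\ref{roots}) one has $\g^\xi_1=\g_{\alpha_1}\oplus\g_{\alpha_2}$. Writing $\Psi^{-1}\Psi_z=\sum_{i\ge0}X'_i\lambda^i$ as in Proposition \ref{norm2}, I would first decompose the $\g^\xi_1$-component of $X'_0$ as ${X'_0}^1=c_1\,e_{\alpha_1}+c_2\,e_{\alpha_2}$, where $e_{\alpha_1},e_{\alpha_2}$ span the root spaces $\g_{\alpha_1},\g_{\alpha_2}$ and $c_1,c_2$ are meromorphic functions on $M$.

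The first step is to express fullness of $A$ in terms of $c_1,c_2$. Since $A=A_{-3}$ is a holomorphic line bundle and the flag satisfies ${A_i}_z\subset A_{i+1}$, the osculating subbundles $A^{(j)}=\mathrm{Span}\{s,s',\ldots,s^{(j)}\}$ of a meromorphic section $s$ of $A$ obey $A^{(j)}\subseteq A_{-3+j}$, so $A$ is full precisely when these climb the whole flag, i.e. $A^{(6)}=\C^7$. Using the description (\ref{ioio}) of $\g^\xi_1$ by the shifts $E_j\to E_{j+1}$, one checks from the root data (\ref{roots}), (\ref{Hs}) that $e_{\alpha_1}$ realizes the advances $E_{-3}\to E_{-2}$, $E_{-1}\to E_0$, $E_0\to E_1$, $E_2\to E_3$, while $e_{\alpha_2}$ realizes $E_{-2}\to E_{-1}$ and $E_1\to E_2$. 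Hence the successive jumps of the osculating flag along $E_{-3}\to E_{-2}\to\cdots\to E_3$ are governed, up to nonvanishing factors, by $c_1$ and $c_2$ alternately, and an elementary Frenet computation shows that $A^{(6)}=\C^7$ holds if and only if both $c_1\not\equiv0$ and $c_2\not\equiv0$. Consequently, if $A$ is not full then $c_1\equiv0$ or $c_2\equiv0$.

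The second step is the normalization. If $c_1\equiv0$ I take $\xi'=H_1$, and if $c_2\equiv0$ I take $\xi'=H_2$. In either case $\xi\preceq\xi'$, and the purely root-theoretic condition (\ref{toing}), namely $\g_j^\xi\subset\bigoplus_{0\le i<j}\g^{\xi'}_i$ for $j>1$, is verified by evaluating the positive roots (\ref{roots}) at $\xi$ and at $\xi'$. Moreover $\g^\xi_1\cap\g^{\xi'}_1=\g_{\alpha_1}$ when $\xi'=H_1$ and $\g^\xi_1\cap\g^{\xi'}_1=\g_{\alpha_2}$ when $\xi'=H_2$; in both cases the hypothesis $\mathrm{Im}{X'_0}^1\subset\g_0^{\xi'}$ of Proposition \ref{norm2} amounts exactly to the vanishing of the relevant coefficient $c_1$, respectively $c_2$, and therefore holds. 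Proposition \ref{norm2} then produces a constant loop $\gamma\in\Omega_{\mathrm{alg}}G_2$ with $\gamma\Phi:M\setminus D\to U_{\xi-\xi'}$. Since $\xi-\xi'=H_2$ (with $\kappa=1$) in the first case and $\xi-\xi'=H_1$ (with $\kappa=2$) in the second, setting $\xi^{\mathrm{n}}=\xi-\xi'$ gives $\kappa(\xi^{\mathrm{n}})\le2$, as required.

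The main obstacle is the identification in the second paragraph: one must match the abstract coefficients $c_1,c_2$ of ${X'_0}^1$, computed in the $\Psi$-gauge, with the genuine osculating behaviour of the geometric line bundle $A=A_{-3}$, and confirm that non-fullness forces one of them to vanish identically rather than merely on a divisor. This requires keeping careful track, via Lemma \ref{poi} and (\ref{ioio}), of how the first-order motion of the bottom of the flag propagates up the Frenet filtration; once this dictionary is set up, the alternation pattern of $c_1$ and $c_2$ makes the dichotomy transparent, and the remaining verifications are the routine root evaluations already used in Lemma \ref{ord}.
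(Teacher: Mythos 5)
Your proof is correct, and its engine is the same as the paper's: both arguments reduce the lemma to Proposition \ref{norm2} applied with $\xi'=H_1$ or $\xi'=H_2$, check the root-theoretic condition (\ref{toing}) by evaluating the positive roots (\ref{roots}), and land in $U_{\xi-\xi'}$ with $\xi-\xi'=H_2$ (so $\kappa=1$) or $\xi-\xi'=H_1$ (so $\kappa=2$). Where you genuinely differ is in how non-fullness of $A$ is converted into the hypothesis $\mathrm{Im}\,{X'_0}^1\subset\g_0^{\xi'}$. The paper opens with the dichotomy ``$A$ is constant or $D$ is constant'' (asserted without proof) and then uses (\ref{ioio}) and (\ref{im}) to conclude that, say, constancy of $A$ forces ${X'_0}^1$ to take values in the part of $\g^\xi_1$ realizing the shifts $E_{-2}\to E_{-1}$ and $E_1\to E_2$, i.e.\ in $\g_{\alpha_2}\subset\g_0^{H_1}$. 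You run the implication in the opposite direction: decomposing ${X'_0}^1=c_1e_{\alpha_1}+c_2e_{\alpha_2}$ and tracking which shifts $E_j\to E_{j+1}$ each root vector realizes, you show that fullness of $A$ is equivalent to $c_1\not\equiv0$ and $c_2\not\equiv0$. Since $c_1\equiv 0$ is exactly constancy of $A$ and $c_2\equiv 0$ is exactly constancy of $D$, your dichotomy coincides with the paper's, and your pairing ($c_1\equiv0\Rightarrow\xi'=H_1\Rightarrow\xi^{\mathrm{n}}=H_2$; $c_2\equiv0\Rightarrow\xi'=H_2\Rightarrow\xi^{\mathrm{n}}=H_1$) reproduces the paper's two cases exactly; the bonus of your route is that it actually proves the dichotomy which the paper only asserts. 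One point to tighten: $c_1,c_2$ are a priori only smooth functions, not meromorphic ones, since the gauge $\Psi$ is neither holomorphic nor meromorphic (its $\bar z$-derivative is constrained by (\ref{im}) but not zero); the alternative ``vanishes identically or only on a divisor'' therefore cannot be invoked for $c_1,c_2$ directly, but it does hold because the corresponding second fundamental forms $A_i\to A_{i+1}/A_i$ of the holomorphic flag $A_i=p_i(W\cap\lambda^iH_+)$ are holomorphic bundle maps; making that identification explicit is the precise content of the ``dictionary'' your last paragraph alludes to.
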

\begin{proof}
  If $A$ is not full, then either $A$ is constant or $D$ is constant. Suppose first that $A$ is constant.
Consider ${X'_0}^1$ as is Proposition \ref{norm2}.  Since $A$ is constant (and, consequently, $A^{a}$ is also constant), by (\ref{ioio}) and (\ref{im}), ${X'_0}^1$ takes values in
$$\big\{\mathrm{Hom}(E_{-2},E_{-1})\oplus \mathrm{Hom}(E_{1},E_{2})\big\}\cap \g_2^\C\subset \g_0^{H_1}.$$
On the other hand, taking account (\ref{gis}),
\begin{align*}
\g^\xi_2=\g_{\alpha_1+\alpha_2}\subset \g^{H_1}_1,\quad \g^\xi_3=\g_{2\alpha_1+\alpha_2}\subset \g^{H_1}_2,\quad
\g^\xi_4=\g_{3\alpha_1+\alpha_2}\subset \g^{H_1}_3,\quad
\g^\xi_5=\g_{3\alpha_1+2\alpha_2}\subset \g^{H_1}_3.
\end{align*}
Hence, since $\xi\preceq H_1$, by  Proposition \ref{norm2}, there exists a constant loop $\gamma \in \Omega G_2$ such that $\gamma\Phi:M\setminus D\to U_{H_2}$. This $\gamma$ is given by $\gamma^{-1}=\mathcal{U}_{\xi, H_1}(\Phi)$ and
$$V\equiv \mathcal{U}_{\xi, H_1}(\Phi)H_+=\lambda (W\cap \lambda^{-3} H_+)+ (W\cap \lambda^{-1} H_+)+\lambda^{-1}(W\cap\lambda^2H_+)+\lambda^2 H_+.$$
In this case, we take $\xi^{\mathrm{n}}=H_2$, which means that $\kappa(\xi^{\mathrm{n}})= 1$.

If $D$ is constant, the same argument can be used to prove the existence of a constant loop $\gamma \in \Omega G_2$ such that $\gamma\Phi:M\setminus D\to U_{H_1}$. In this case, we take $\xi^{\mathrm{n}}=H_1$, which means that $\kappa(\xi^{\mathrm{n}})= 2$.
\end{proof}

\subsubsection{Case $k=2$ and $l=1$}
We have $\xi=H_1$ and
\begin{equation}\label{seila2}
u_\xi(W)=\lambda^{-2}A+ \lambda^{-1}A^{a}+  \overline{A^a}^\perp +\lambda \overline{A}^\perp+\lambda^2H_+.\end{equation}
where, for each $z\in M$, $A(z)=A_{-2}(z)$ is an isotropic line.

At most, we have to take twelve $\C^7$-valued meromorphic functions as follows: take four meromorphic sections of the form
$$\begin{array}{ll}
  s=s_{-2}\lambda^{-2}+s_{-1}\lambda^{-1}+s_0+s_{1}\lambda \quad & w=w_{-1}\lambda^{-1}+w_0+w_{1}\lambda \\
  u=u_{-1}\lambda^{-1}+u_0+u_{1}\lambda &  v=v_0+v_{1}\lambda
\end{array}$$
satisfying (\ref{s1}), and such that:
\begin{align*}
  A=\mathrm{Span}\{s_{-2}\},\,\,A^a&=\mathrm{Span}\{s_{-2},w_{-1},u_{-1}\},\,\, \overline{A^a}^\perp=\mathrm{Span}\{s_{-2},w_{-1},u_{-1},v_0\}.
\end{align*}
Then, $X$ is given by $X=\mathrm{Span}\{s,u,w,v\}+\lambda\overline{A}^\perp$.
\subsubsection{Case $k=1$ and $l=0$} We have $\xi=H_2$ and
$u_\xi(W)= \lambda^{-1}D+  \overline{D}^\perp +\lambda H_+$,
where, for each $z\in M$, $D(z)$ is a complex coassociative $2$-plane. In this case, $W$ can be obtained from four $\C^7$-valued meromorphic function as follows: take two meromorphic sections
$s=s_{-1}\lambda^{-1}+s_0$ and $w=w_{-1}\lambda^{-1}+w_0$, satisfying (\ref{s1}), such that $D$ is generated by $s_{-1}$ and $w_{-1}$; then
$X=\mathrm{Span} \{s,w\} + \overline{D}^\perp.$

\subsection{Harmonic maps into the Grassmannian of $3$-associative planes}

The exceptional Lie group $G_2$ acts transitively on the the Grassamannian of $3$-associative planes $Gr^a_3(\mathrm{Im}\,\mathbb{O})$ with isotropy
subgroup at a fixed point isomorphic to $SO(4)$. This is an
  inner symmetric space with totally geodesic embedding
  $\iota :Gr^a_3(\mathrm{Im}\,\mathbb{O})\to G_2$ given by $\iota (\mathcal{A}) =\pi_\mathcal{A} - \pi_\mathcal{A}^\perp$. Hence, harmonic
  maps into such inner symmetric spaces can be viewed as special
  harmonic maps into $G_2$.

Consider an harmonic map
$\varphi:M\to Gr^a_3(\mathrm{Im}\,\mathbb{O})$ of finite uniton number with extended solution $\Phi$ and set $W=\Phi H_+:M\to Gr_\mathcal{I}(G_2).$ If
$W(z)\ominus\lambda W(z)$ is identified with $\C^{7}$ by evaluating
at $\lambda=1$, then the element of order $2$ in $G_2$
corresponding to $W(z)$ is given by the decomposition: \bdm
W(z)\ominus\lambda W(z) =(W(z)\ominus\lambda
W(z))^{\textrm{even}}\oplus(W(z)\ominus\lambda
  W(z))^{\textrm{odd}}.
  \edm
  Since $\iota (\mathcal{A}) =\pi_\mathcal{A} - \pi_\mathcal{A}^\perp$, we must have $\dim_\mathbb{C} (W(z)\ominus\lambda
  W(z))^{\textrm{even}}=3$. The harmonic map $\varphi$ is recovered by evaluating
  $(W(z)\ominus\lambda W(z))^{\textrm{odd}}$ at $\lambda=1$.

 The filtration of $W$ by $W\cap
\lambda^{i}H_+$ \bdm W=W\cap \lambda^{-k}H_+\supseteq \ldots
\supseteq W\cap \lambda^{k-1}H_+ \supseteq W\cap
\lambda^{k}H_+=\lambda^{k}H_+ \edm induces a splitting
 \begin{equation}
 \label{sp}
  W\ominus \lambda W=V_{-k}\oplus\ldots\oplus V_k\,,
 \end{equation}
   where
 \begin{equation*}
 V_i\cong (W\cap \lambda^{i}H_+)/((\lambda W\cap
\lambda^{i}H_+)+(W\cap \lambda^{i+1}H_+))\cong A_i/A_{i-1},
\end{equation*}
with  $A_i=p_i(W\cap \lambda^iH_+)$ and  $p_i(\sum a_j\lambda^j)=a_i$.
 The involution $\mathcal{I}$ fixes the decomposition (\ref{sp}) and acts on
$V_i$ as $(-1)^{i}$. We therefore have $\sum_{i\,\, \textrm{even}}
\dim_\C V_i=3$.

If $\Phi:M\setminus D\to U_\xi^\mathcal{I}$ and $\xi\preceq \xi'$, then, by Proposition \ref{proposition}, we have $\mathcal{U}_{\xi,\xi'}(\Phi):M\setminus D\to U_{\xi'}^\mathcal{I}$. Hence,  if $\varphi:M\to
Gr^a_3(\mathrm{Im}\,\mathbb{O})$ is an harmonic map of finite uniton number, it admits an extended solution $\Phi^\mathrm{n}:M\setminus D\to U_{\xi^\mathrm{n}}^\mathcal{I}$ such that $\lambda^3H_+\subset W^\mathrm{n}\subset \lambda^{-3}H_+$, with $W^\mathrm{n}=\Phi^\mathrm{n}H_+$. Next we give a description of Frenet frame data associated to such extended solutions.
\subsubsection{Case $k=3$ and $l=1$}  In this case,  $\xi=H_1+H_2$ and $u_\xi(W)$ given by (\ref{seila1}) corresponds to the harmonic map $\varphi_\xi:M\to Gr^a_3(\mathrm{Im}\,\mathbb{O})$ given by
$\varphi_\xi=(D\ominus A)\oplus (\overline{A^a}^\perp\ominus A^a)\oplus (\overline{A}^\perp\ominus \overline{D}^\perp).$
If $A$ is full,  $W$ is generated by a line bundle
$X=\mathrm{span} \{s=s_{-3}\lambda^{-3}+s_{-1}\lambda^{-1}+s_{1}\lambda\}.$
If $A$ is not full, then, by Lemma \ref{nfull}, $r(\varphi)\leq 4$.

\subsubsection{Case $k=2$ and $l=1$}
We have $\xi=H_1$  and
$u_\xi(W)$ given by (\ref{seila2}) corresponds to the harmonic map $\varphi_\xi:M\to Gr^a_3(\mathrm{Im}\,\mathbb{O})$ given by
$\varphi_\xi=A\oplus (\overline{A^a}^\perp\ominus A^a)\oplus \overline{A}.$
In this case, we must consider six $\C^7$-valued meromorphic functions: take
$s=s_{-2}\lambda^{-2}+s_0$, $w=w_{-1}\lambda^{-1}+w_{1}\lambda$,
  $u=u_{-1}\lambda^{-1}+u_{1}\lambda$
satisfying (\ref{s1}) and such that $A=\mathrm{Span}\{s_{-2}\}$ and $A^a=\mathrm{Span}\{s_{-2},w_{-1},u_{-1}\}.$
Then, $X$ is given by $X=\mathrm{Span}\{s,w,u\}+\overline{A^a}^\perp+\lambda\overline{A}^\perp$.

\subsubsection{Case $k=1$ and $l=0$}Since $W$ takes values in $Gr_\mathcal{I}(G_2)$,  we must have:
$W=u_{\xi}(W)=\lambda^{-1}D+ \overline{D}^\perp+\lambda H_+.$ Hence, all harmonic maps $\varphi:M\to Gr^a_3(\mathrm{Im}\,\mathbb{O})$ of finite uniton number $r(\varphi)=2$ are of the form $\varphi=\overline{D}^\perp \ominus D$ for some holomorphic subbundle $D$ of $\underline{\C}^7$ such that the fibre at $z$, $D(z)$, is a complex coassociative $2$-plane for each $z\in M$.

{\tiny{Departamento de Matem\'{a}tica, Universidade da Beira
Interior, Rua Marqu\^{e}s d'\'{A}vila e Bolama 6201-001
Covilh\~{a} - Portugal

email: ncorreia@ubi.pt, rpacheco@ubi.pt}}

\end{document}